\theoremstyle{plain}
\newtheorem{theorem}{Theorem}[section]
\newtheorem{lemma}[theorem]{Lemma}
\newtheorem{proposition}[theorem]{Proposition}
\theoremstyle{definition}
\newtheorem{definition}[theorem]{Definition}
\newtheorem{example}[theorem]{Example}
\theoremstyle{remark}
\newtheorem{remark}[theorem]{Remark}
\title{\bf Hopf algebra of building sets}
\author{Vladimir Gruji\'c\thanks{Authors are supported by Ministry of Science of Republic of Serbia, project
174034.}\\
\small Faculty of Mathematics\\[-0.8ex]
\small Belgrade University\\[-0.8ex]
\small Serbia\\
\small\tt vgrujic@matf.bg.ac.rs\\
\and
Tanja Stojadinovi\'c\\
\small Faculty of Mathematics\\[-0.8ex]
\small Belgrade University\\[-0.8ex]
\small Serbia\\
\small\tt tanjas@matf.bg.ac.rs}
\date{
\small Mathematics Subject Classifications: 16T05, 16T30, 05E05,
06A11}
\begin{document}
\maketitle

\begin{abstract}
The combinatorial Hopf algebra on building sets $BSet$ extends the
chromatic Hopf algebra of simple graphs. The image of a building
set under canonical morphism to quasi-symmetric functions is the
chromatic symmetric function of the corresponding hypergraph. By
passing from graphs to building sets, we construct a sequence of
symmetric functions associated to a graph. From the generalized
Dehn-Sommerville relations for the Hopf algebra $BSet$, we define
a class of building sets called eulerian and show that eulerian
building sets satisfy Bayer-Billera relations. We show the
existence of the $\mathbf{c}\mathbf{d}-$index, the polynomial in
two noncommutative variables associated to an eulerian building
set. The complete characterization of eulerian building sets is
given in terms of combinatorics of intersection posets of
antichains of finite sets.

  \bigskip\noindent \textbf{Keywords:} Hopf algebra, building set, graph, symmetric
function, Dehn-Sommerville relations, cd-index, simplicial complex
\end{abstract}

\section{Introduction}

Many combinatorial objects may be endowed with a Hopf algebra
structure. The best known examples are Rota's Hopf algebra of
finite graded posets $\cite{JR}$ and the chromatic Hopf algebra of
simple graphs $\cite{WS1}$.

The theory of combinatorial Hopf algebras is developed in
$\cite{ABS}.$ In Section 2 we recall the basic definitions and
properties of combinatorial Hopf algebras and of quasi-symmetric
functions. The Hopf algebra of quasi-symmetric functions $QSym$ is
the terminal object in the category of combinatorial Hopf
algebras. It explains the ubiquity of quasi-symmetric functions as
generating functions in enumerative combinatorics.

The notion of a building set is originated in the work of De
Concini and Procesi $\cite{DCP}$ in the context of subspace
arrangement and developed by Feichtner and Sturmfels $\cite{FS}$
and Postnikov $\cite{P}$. The concept of a building set appears as
a combinatorial condition that polytopes from the certain class,
called nestohedra, are simple. An example of a building set is
provided by the collection of the vertex sets of connected
subgraphs in a given graph. Building sets are a kind of Whitney
systems. The Hopf algebra on Whitney systems is constructed in
$\cite{WS2}$. In Section 3, based on Schmitt's work, we introduce
the Hopf algebra of building sets $BSet$ that extends the
chromatic Hopf algebra $\mathcal{G}$ of simple graphs.

In Section 4 we define the chromatic symmetric function of a
building set as the image under the canonical morphism from
building sets to quasi-symmetric functions. To a building set
$\mathcal{B}=\mathcal{B}(\mathcal{C})$ is uniquely associated the
collection $\mathcal{C}_{\min}$ of minimal elements of the
generating collection $\mathcal{C}$, which is an antichain of
finite sets. The building sets may be seen as hypergraphs and
colorings of building sets are equivalent to colorings of
hypergraphs. The chromatic symmetric function of a building set
$\mathcal{B}(\mathcal{C})$ depends only on the associated
hypergraph $\mathcal{C}_{\min}$. In the setting of building sets,
we use the expansion of the chromatic symmetric function of a
hypergraph in the basis of the power sum symmetric functions,
given in $\cite{SS}$. The derived formulas for the induced
chromatic polynomial of a building set are analogues to the
classical Whitney's formulas for the chromatic polynomial of a
graph $\cite{W}$.

In Section 5 we construct a sequence of algebra morphisms
$\beta_n$ from graphs to building sets, that produces a sequence
of symmetric functions associated to graphs. Two numerical
invariants of graphs, the numbers of acyclic and of totally cyclic
orientations, arises from this construction.

Every combinatorial Hopf algebra $(\mathcal{H},\zeta)$ possesses
the canonical odd Hopf subalgebra $S_-(\mathcal{H},\zeta)$ on
which the character $\zeta$ is odd. This subalgebra is
characterized by certain canonical relations, called the
generalized Dehn-Sommerville relations. In the case of
$\mathcal{H}= QSym$, the generalized Dehn-Sommerville relations
are precisely the Bayer-Billera relations for flag $f-$vectors of
eulerian posets $\cite{BB}$. In Section 6, by analogy with
eulerian posets, we use the generalized Dehn-Sommerville relations
for the Hopf algebra $BSet$ to characterize a class of building
sets called eulerian. There is no analogues notion of eulerian
graphs, because no particular graph satisfies the generalized
Dehn-Sommerville relations for the chromatic Hopf algebra of
graphs. We derive that eulerian building sets satisfy the
Bayer-Billera relations.

The $\mathbf{c}\mathbf{d}-$ index of an eulerian poset is a
polynomial in noncommutative variables, firstly introduced by Fine
(see $\cite{BK}$). Its existence is equivalent to the
Bayer-Billera relations for the flag $f-$vector. The
$\mathbf{c}\mathbf{d}-$index is defined for elements of the
eulerian subalgebra of an infinitesimal Hopf algebra by a
construction given in $\cite{A}$. In Section 7, by analogy with
eulerian posets, we construct the $\mathbf{c}\mathbf{d}-$index of
eulerian building sets.

In Section 8, we consider how the algebraic condition of being
eulerian is related to combinatorics of antichains of finite sets.
We obtain the complete characterization and show that eulerian
building sets corresponds to clique complexes of chordal graphs.

\section{Combinatorial Hopf algebras}
\label{sec:1}

In this section we recall the basic definitions and properties of
combinatorial Hopf algebras, developed in $\cite{ABS}$.
Throughout, $n$ is a non-negative integer, $[n]$ denotes the set
$\{1,\ldots,n\}$ and $|X|$ denotes the cardinality of a finite set
$X$. A composition $\alpha\models n$ is a sequence
$\alpha=(a_1,\ldots,a_k)$ of positive integers with
$a_1+\cdots+a_k=n$. A partition $\lambda\vdash n$ is a multiset
$\lambda=\{l_1,\ldots,l_k\}$ such that $l_1+\cdots+ l_k=n$.

A combinatorial Hopf algebra $(\mathcal{H},\zeta)$ is a graded
connected Hopf algebra $\mathcal{H}$ over a field $\mathbb{K}$
equipped with a multiplicative linear functional
$\zeta:\mathcal{H}\rightarrow\mathbb{K}$, called character. A
morphism of combinatorial Hopf algebras $(\mathcal{H}_1,\zeta_1)$
and $(\mathcal{H}_2,\zeta_2)$ is a morphism of graded Hopf
algebras $\phi:\mathcal{H}_1\rightarrow\mathcal{H}_2$ such that
$\zeta_2\circ\phi=\zeta_1$.

\paragraph{Characters}

Let $\mathbb{X}(\mathcal{H})$ be the set of characters of an
arbitrary Hopf algebra $\mathcal{H}$. The set
$\mathbb{X}(\mathcal{H})$, under the convolution product

$$\varphi\psi=m_\mathbb{K}\circ(\varphi\otimes\psi)\circ\Delta_\mathcal{H},$$
is a group with the unit $\epsilon_\mathcal{H}$ and the inverse
$\varphi^{-1}=\varphi\circ S_\mathcal{H},$ where
$\epsilon_\mathcal{H}$ and $S_\mathcal{H}$ are the counit and the
antipode of the Hopf algebra $\mathcal{H}$.

Let $\mathcal{H}_n$ be the homogeneous component of the grading
$n$ of a graded Hopf algebra $\mathcal{H}$. Denote by
$\varphi_n=\varphi|_{\mathcal{H}_n}$ the restriction of a
character $\varphi$ on the component $\mathcal{H}_n$. The
conjugate character $\overline{\varphi}$ is defined on homogeneous
elements by $\overline{\varphi}(h)=(-1)^{n}\varphi(h), \
h\in\mathcal{H}_n.$ A character $\varphi$ is said to be even if
$\varphi=\overline{\varphi}$ and it is said to be odd if
$\varphi^{-1}=\overline{\varphi}.$ Every character $\varphi$ on a
graded connected Hopf algebra decomposes uniquely as a product of
characters $\varphi=\varphi_+\varphi_-,$ with $\varphi_+$ even and
$\varphi_-$ odd $(\cite{ABS}, {\rm Theorem} \ 1.5.).$

The odd subalgebra $S_{-}(\mathcal{H},\zeta)$ of a combinatorial
Hopf algebra $(\mathcal{H},\zeta)$ is defined as the largest
graded subcoalgebra on which the character $\zeta$ is odd. If
$\phi:(\mathcal{H}_1,\zeta_1)\rightarrow(\mathcal{H}_2,\zeta_2)$
is a morphism of combinatorial Hopf algebras then

$$\phi(S_-(\mathcal{H}_1,\zeta_1))\subset
S_-(\mathcal{H}_2,\zeta_2).$$

For a character $\varphi$ and a composition
$\alpha=(a_1,\ldots,a_k)\models n$, denote by $\varphi_\alpha$ the
convolution product

\begin{equation}\label{convolution}
\varphi_{a_1}\cdots\varphi_{a_k}:\mathcal{H}
\stackrel{\Delta^{(k-1)}}\longrightarrow\mathcal{H}^{\otimes
k}\stackrel{proj}\longrightarrow\mathcal{H}_{a_1}\otimes\cdots\otimes\mathcal{H}_{a_k}
\stackrel{\varphi^{\otimes k}}\longrightarrow\mathbb{K}.
\end{equation}

\paragraph{Quasi-symmetric functions}

The basic reference for quasi-symmetric functions is $\cite{EC}$.
The algebra $QSym$ of quasi-symmetric functions is a graded
subalgebra of the algebra $\mathbb{K}[[x_1,x_2,\ldots]]$ of formal
power series of finite degree in countably variables. It is
linearly spanned by monomial quasi-symmetric functions

$$M_\alpha=\sum_{i_1<i_2<\cdots<
i_k}x_{i_1}^{a_1}x_{i_2}^{a_2}\cdots x_{i_k}^{a_k},$$where
$\alpha=(a_1,a_2,\ldots a_k)\models n$ is a composition of an
integer $n\in\mathbb{N}$. It is a graded Hopf algebra with
coproduct

$$\Delta(M_\alpha)=\sum_{\alpha=\beta\gamma}M_\beta\otimes
M_\gamma,$$where $\beta\gamma$ is the concatenation of
compositions $\beta$ and $\gamma$.

Let $\zeta:\mathbb{K}[[x_1,x_2,\ldots]]\rightarrow\mathbb{K}$ be
an algebra morphism defined on variables by $\zeta(x_1)=1$ and
$\zeta(x_i)=0$ for $i\neq 1$. The universal character $\zeta_Q$ on
$QSym$ is the restriction $\zeta_Q=\zeta|_{QSym}$. It is
determined on the monomial basis by

$$\zeta_Q(M_\alpha)=\left\{\begin{array}{cc} 1, & \alpha=(n)
\ \mbox{or} \ ()\\ 0, & \mbox{otherwise}\end{array}\right..$$

\noindent One of the main results of $(\cite{ABS}, {\rm Theorem} \
4.1.)$ is that for an arbitrary combinatorial Hopf algebra
$(\mathcal{H},\zeta)$, there is a unique morphism of combinatorial
Hopf algebras $\Psi:(\mathcal{H},\zeta)\longrightarrow
(QSym,\zeta_Q),$ which is defined on homogeneous elements
$h\in\mathcal{H}_n$ by

\begin{equation}\label{canonical}
\Psi(h)=\sum_{\alpha\models n}\zeta_\alpha(h)M_\alpha.
\end{equation} The morphism $\Psi$ we call the {\it canonical morphism} of the
combinatorial Hopf algebra $(\mathcal{H}, \zeta)$.

Given a composition $\alpha=(a_1,\ldots,a_k)\models n$, let
$s(\alpha)=\{a_1,\ldots,a_k\}\vdash n$ be the corresponding
partition. The Hopf algebra of symmetric functions $Sym$ is a Hopf
subalgebra of $QSym$ linearly spanned by monomial symmetric
functions $m_\lambda=\sum_{s(\alpha)=\lambda}M_\alpha$, where
$\lambda$ runs over all partitions. The {\it principal
specialization} of symmetric functions is an assignment of the
value at $x_1=x_2=\cdots=x_m=1, x_{m+1}=x_{m+2}=\cdots=0$ to a
symmetric function $\phi\in Sym$. It is a polynomial in $m$
denoted by $\phi(1^m)$.

\section{Building sets}
\label{sec:2}

\begin{definition}\label{bs}

A collection $\mathcal{B}\subset\mathcal{P}(X)$ of non-empty
subsets of a finite set $X$ is a {\it building set} on $X$ if it
satisfies two conditions:

(B1) If $S, S'\in \mathcal{B}$ and $S\cap S'\neq\emptyset$ then
$S\cup S'\in \mathcal{B}$

(B2) $\{i\}\in \mathcal{B}$ for all $i\in X$.

\end{definition}

We write $\mathcal{B}_X$ to indicate the ground set $X$. If we do
not require the condition (B2), a family $\mathcal{B}$ is called a
Whitney system on $X$ $\cite{WS2}$. Building sets on $X$ are
ordered by inclusion. The minimal building set contains only
singletons $\mathcal{D}_X=\{\{i\}|i\in X\}.$ We call
$\mathcal{D}_X$ the discrete building set on $X$. The maximal
building set $\mathcal{P}_X=\mathcal{P}(X)\setminus\{\emptyset\}$
contains all non-empty subsets of $X$. The rank
$rank(\mathcal{B})$ of a building set $\mathcal{B}$ is the
cardinality of the ground set $X$. The restriction of a building
set $\mathcal{B}$ to a subset $I\subset X$ is a building set on
$I$ defined by $\mathcal{B}|_I=\{S\in \mathcal{B}|S\subset I\}$.

Let $\mathcal{B}_X$ and $\mathcal{B}_Y$ be building sets on finite
sets $X$ and $Y$. A map $f:X\longrightarrow Y$ is a map of
building sets if $f^{-1}(S)\in\mathcal{B}_X$ for all
$S\in\mathcal{B}_Y$. We say that building sets $\mathcal{B}_X$ and
$\mathcal{B}_Y$ are equivalent if there is a bijection
$f:X\longrightarrow Y$ such that
$f(S)\in\mathcal{B}_Y\Leftrightarrow S\in\mathcal{B}_X$.

The elements of a building set are ordered by inclusion. The
restriction $\mathcal{B}|_I$ to a maximal element
$I\in\mathcal{B}$ is called a {\it connected component} of
$\mathcal{B}$. Every building set is a disjoint union of its
connected components. A building set $\mathcal{B}_X$ is {\it
connected} if $X\in\mathcal{B}_X$. The minimal connected building
set that contains $\mathcal{B}$ is
$\overline{\mathcal{B}}=\mathcal{B}\cup\{X\}$.

Suppose that is given an arbitrary collection
$\mathcal{C}\subset\mathcal{P}(X)$ of subsets of a finite set $X$,
such that every $S\in\mathcal{C}$ has at least two elements.
Define inductively the sequence of collections
\[\mathcal{C}_0=\mathcal{C}, \ \mathcal{C}_{k+1}=\mathcal{C}_k\cup\{S\cup S'| S\in\mathcal{C}_0,
S'\in\mathcal{C}_k, S\cap S'\neq\emptyset\}, \ k\geq 0.\]The union
$\mathcal{B}(\mathcal{C})=\bigcup_{k\geq0}\mathcal{C}_k$, with all
singletons $\{x\},x\in X$ added, is a building set on $X$. For a
building set $\mathcal{B}$ on $X$ there is a unique minimal
collection $\mathcal{C}\subset\mathcal{P}(X)$ such that
$\mathcal{B}=\mathcal{B}(\mathcal{C})$. We call such collection
the {\it generating collection} of a building set $\mathcal{B}$.

The motivating example of a building set comes from graph theory.

\begin{example}\label{graphical}

The graph $\Gamma=(V,E)$, with the sets of vertices $V$ and edges
$E$, is called simple if there are no either multiple edges nor
loops. For a simple graph $\Gamma=(V,E)$ a collection
$\mathcal{B}(\Gamma)=\{I\subset V| \Gamma|_I$ {\rm is connected}\}
is a building set on $V$. We call $\mathcal{B}(\Gamma)$ the {\it
graphical building set} corresponding to the graph $\Gamma$. Note
that the graphical building set $\mathcal{B}(\Gamma)$ is connected
if and only if the graph $\Gamma$ is connected. Also for each
subset of vertices $I\subset V$ the restriction
$\mathcal{B}(\Gamma)|_I$ is the graphical building set
$\mathcal{B}(\Gamma|_I)$ corresponding to the induced subgraph
$\Gamma|_I$. The set of edges $E$ is the generating collection of
the graphical building set $\mathcal{B}(\Gamma).$

\end{example}

Let $BSet$ be the vector space over the field $\mathbb{K}$ of
characteristic zero, spanned by all equivalence classes of
building sets and $BSet_n$ its subspace spanned by equivalence
classes of building sets of rank $n$. The space $BSet$, endowed
with product
$\mathcal{B}_X\cdot\mathcal{B}_Y=\mathcal{B}_X\sqcup\mathcal{B}_Y,$
where $\mathcal{B}_X\sqcup\mathcal{B}_Y=\{S\subset X\sqcup
Y|S\in\mathcal{B}_X \ {\rm or}\ S\in\mathcal{B}_Y\}$ is a building
set on disjoint union $X\sqcup Y$, and coproduct

$$\Delta(\mathcal{B}_X)=\sum_{I\subset
X}(\mathcal{B}_X)|_I\otimes(\mathcal{B}_X)|_{I^{c}}$$ is a graded,
connected commutative and cocommutative Hopf algebra. The unit is
the building set $\mathcal{B}_\emptyset$ on the empty set. Denote
by $Conn$ the family of all equivalence classes of connected
building sets. Then, as the algebra, $BSet$ is isomorphic to the
polynomial algebra $\mathbb{K}[Conn]$ generated by the family
$Conn$. As a graded, connected bialgebra, $BSet$ possesses the
antipode $S:BSet\rightarrow BSet$, determined by

\begin{equation}\label{antipode}
S(\mathcal{B}_X)=\sum_{k\geq 1}(-1)^{k}\sum_{J_1\sqcup\ldots\sqcup
J_k=X}\prod_{j=1}^{k}(\mathcal{B}_X)|_{J_j},
\end{equation}
for building sets on $X\neq\emptyset,$ where the inner sum is over
all ordered k-tuples $(J_1,\ldots,J_k)$ of non-empty disjoint
subsets, whose union is equal to $X$.

\section{Chromatic symmetric function of a building set}
\label{sec:4}

Let $\zeta$ be a character on the Hopf algebra of building sets
$BSet$ defined by
$$\zeta(\mathcal{B})=\left\{\begin{array}{cc}1, &
\mathcal{B} \ \mbox{is discrete}\\ 0, &
\mbox{otherwise}\end{array}\right..$$ For a building set
$\mathcal{B}_X$ on the set $X$ with $n$ elements and a composition
$\alpha=(a_1,\ldots,a_k)\models n$, the value of the convolution
product $\zeta_\alpha(\mathcal{B}_X),$ defined by
$(\ref{convolution})$, is the number of ordered decompositions
$X=J_1\sqcup\ldots\sqcup J_k$ such that $(\mathcal{B}_X)|_{J_i}$
is discrete and $|J_i|=a_i$, for all $i=1,\ldots,k$. We call a
function $f:X\rightarrow \mathbb{N}$ a {\it proper coloring} \ of
a building set $\mathcal{B}_X$ if for every set
$S\in\mathcal{B}_X$ with at least two elements, there are $i,j\in
S$ such that $f(i)\neq f(j)$. For each ordered decomposition
$X=J_1\sqcup\ldots \sqcup J_k$ such that $(\mathcal{B}_X)|_{J_i}$
is discrete for all $i=1,\ldots k$ and positive integers
$n_1<\cdots<n_k,$ there is a proper coloring $f$ given by
$f|_{J_i}=n_i$. Conversely, each proper coloring
$f:X\rightarrow\mathbb{N}$ of the building set $\mathcal{B}_X$,
with $f(X)=\{n_1<\cdots<n_k\}$, defines an ordered decomposition
$X=f^{-1}(\{n_1\})\sqcup\ldots f^{-1}(\{n_k\}),$ where
$(\mathcal{B}_X)|_{f^{-1}(\{n_i\})}$ is discrete for all
$i=1,\ldots k$.

\begin{definition}\label{csf}

The {\it chromatic symmetric function} of a building set
$\mathcal{B}_X\in BSet_n$ is its image under the canonical
morphism $\Psi:(BSet,\zeta)\rightarrow(QSym,\zeta_Q)$, given by
$(\ref{canonical})$ with

\[\Psi(\mathcal{B}_X)=\sum_{\alpha\models
n}\zeta_\alpha(\mathcal{B}_X)M_\alpha.\]

\end{definition}

\noindent The function $\Psi(\mathcal{B}_X)$ is obviously
symmetric. The principal specialization of the function
$\Psi(\mathcal{B}_X)$ counts proper colorings with finite number
of colors.

\begin{definition}\label{cp}

The {\it chromatic polynomial} $\chi(\mathcal{B}_X,m)$ of a
building set $\mathcal{B}_X$ is the principal specialization

\[\chi(\mathcal{B}_X,m)=\Psi(\mathcal{B}_X)(1^m).\]

\end{definition}

\noindent Since the principal specialization on monomial basis is
given by $M_\alpha(1^m)={m \choose k(\alpha)}$, where $k(\alpha)$
is the length of a composition $\alpha\models n$, we obtain

\[\chi(\mathcal{B}_X,m)=\sum_{\alpha\models
n}\zeta_\alpha(\mathcal{B}_X){m\choose k(\alpha)}.\] We are
especially interested in the value of the chromatic polynomial of
a building set at $m=-1$, which is

\begin{equation}\label{invariant}
\chi(\mathcal{B}_X,-1)=\sum_{\alpha\models
n}(-1)^{k(\alpha)}\zeta_\alpha(\mathcal{B}_X).
\end{equation} It defines some numerical invariant,
which we call the {\it $(-1)-$invariant} of building sets.

Recall the definition of the Hopf algebra of simple graphs,
considered by Schmitt, $\cite{WS1}$. Let $\mathcal{G}$ be the
$\mathbb{K}-$vector space spanned by all equivalence classes of
finite simple graphs, graded by the number of vertices of a graph.
The space $\mathcal{G}$ is a graded, commutative and cocommutative
Hopf algebra with product
$\Gamma_1\cdot\Gamma_2=\Gamma_1\sqcup\Gamma_2$ (the disjoint union
of graphs) and coproduct $$\Delta(\Gamma)=\sum_{I\subset
V}\Gamma|_I\otimes\Gamma|_{I^{c}},$$ where $V$ is the set of
vertices of a graph $\Gamma$ and $\Gamma|_I$ its restriction on
vertices $I\subset V$. Let
$\zeta_{\mathcal{G}}:\mathcal{G}\rightarrow\mathbb{K}$ be
$$\zeta_{\mathcal{G}}(\Gamma)=\left\{\begin{array}{cc}1, & \Gamma \
\mbox{is discrete} \\ 0, & \mbox{otherwise}\end{array}\right..$$
The canonical morphism $\Psi_{\mathcal{G}}:(\mathcal{G},
\zeta_{\mathcal{G}})\rightarrow(QSym,\zeta_Q)$, given by
$(\ref{canonical})$ with

\[\Psi_{\mathcal{G}}(\Gamma)=\sum_{\alpha\models
n}(\zeta_{\mathcal{G}})_\alpha(\Gamma)M_\alpha, \
\Gamma\in\mathcal{G}_n\] is Stanley's chromatic symmetric function
of a graph, constructed in $\cite{S}$. Its principal
specialization produces the chromatic polynomial
$\chi(\Gamma,m)=\Psi_{\mathcal{G}}(\Gamma)(1^m)$ of a graph. Let
$\beta:\mathcal{G}\rightarrow BSet$ be the map that sends a graph
$\Gamma$ to the corresponding graphical building set
$\mathcal{B}(\Gamma)$.

\begin{theorem}\label{mono}

The map $\beta:\mathcal{G}\rightarrow BSet$ is a monomorphism of
combinatorial Hopf algebras such that
$\Psi\circ\beta=\Psi_{\mathcal{G}}$.

\end{theorem}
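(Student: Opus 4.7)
The plan is to verify in sequence that $\beta$ is a well-defined graded linear map, that it respects the product, coproduct, and character, that it is injective, and finally to deduce $\Psi\circ\beta=\Psi_{\mathcal{G}}$ from the universal property of $(QSym,\zeta_Q)$.

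First I would check the algebra/coalgebra structure. Well-definedness on equivalence classes and preservation of the grading are immediate: a graph isomorphism $\Gamma_1\to\Gamma_2$ is a bijection of vertex sets that carries connected induced subgraphs to connected induced subgraphs, and $\mathrm{rank}(\mathcal{B}(\Gamma))=|V(\Gamma)|$. For the product, a subset $I\subset V(\Gamma_1)\sqcup V(\Gamma_2)$ induces a connected subgraph of $\Gamma_1\sqcup\Gamma_2$ if and only if $I\subset V(\Gamma_1)$ and $\Gamma_1|_I$ is connected, or $I\subset V(\Gamma_2)$ and $\Gamma_2|_I$ is connected; this matches the definition of $\mathcal{B}(\Gamma_1)\sqcup\mathcal{B}(\Gamma_2)$, so $\beta(\Gamma_1\cdot\Gamma_2)=\beta(\Gamma_1)\cdot\beta(\Gamma_2)$. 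For the coproduct, the identity $\mathcal{B}(\Gamma)|_I=\mathcal{B}(\Gamma|_I)$ noted in Example \ref{graphical} gives
\[
\Delta(\beta(\Gamma))=\sum_{I\subset V}\mathcal{B}(\Gamma)|_I\otimes\mathcal{B}(\Gamma)|_{I^c}=\sum_{I\subset V}\mathcal{B}(\Gamma|_I)\otimes\mathcal{B}(\Gamma|_{I^c})=(\beta\otimes\beta)\Delta(\Gamma).
\]
The character condition $\zeta\circ\beta=\zeta_{\mathcal{G}}$ is equally clean: $\Gamma$ is discrete (edgeless) if and only if every non-singleton vertex subset induces a disconnected subgraph, i.e.\ if and only if $\mathcal{B}(\Gamma)=\mathcal{D}_V$ is the discrete building set.

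Next I would argue injectivity. The key observation is that from $\mathcal{B}(\Gamma)$ one recovers $\Gamma$: the vertex set is the ground set of $\mathcal{B}(\Gamma)$, and the edges are exactly the two-element members of $\mathcal{B}(\Gamma)$, since a two-vertex induced subgraph is connected precisely when those vertices are adjacent. Consequently, any bijection $f$ of ground sets witnessing $\mathcal{B}(\Gamma_1)\equiv\mathcal{B}(\Gamma_2)$ must send two-element members to two-element members, hence edges to edges, so $f$ is a graph isomorphism $\Gamma_1\cong\Gamma_2$. Therefore $\beta$ is injective on equivalence classes and extends to a monomorphism of graded vector spaces, hence of combinatorial Hopf algebras.

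Finally, for $\Psi\circ\beta=\Psi_{\mathcal{G}}$, having established that $\beta\colon(\mathcal{G},\zeta_{\mathcal{G}})\to(BSet,\zeta)$ and $\Psi\colon(BSet,\zeta)\to(QSym,\zeta_Q)$ are both morphisms of combinatorial Hopf algebras, the composition $\Psi\circ\beta$ is a morphism $(\mathcal{G},\zeta_{\mathcal{G}})\to(QSym,\zeta_Q)$. By the universality of $(QSym,\zeta_Q)$ recalled in Section 2, there is a unique such morphism, and this unique morphism is $\Psi_{\mathcal{G}}$; hence the two maps coincide. None of the steps presents a serious obstacle; the only place that requires a moment's care is unpacking the definitions so that the identity $\mathcal{B}(\Gamma)|_I=\mathcal{B}(\Gamma|_I)$ gives both the coproduct compatibility and, via the two-element-members argument, injectivity.
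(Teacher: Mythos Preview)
Your proposal is correct and follows essentially the same approach as the paper: verify that $\beta$ preserves product, coproduct, and character (using $\mathcal{B}(\Gamma)|_I=\mathcal{B}(\Gamma|_I)$), then establish injectivity. You supply more detail than the paper in two places---the explicit recovery of edges as the two-element members of $\mathcal{B}(\Gamma)$ for injectivity, and the explicit invocation of the universal property of $(QSym,\zeta_Q)$ to conclude $\Psi\circ\beta=\Psi_{\mathcal{G}}$---both of which the paper leaves implicit.
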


\begin{proof}

First we show that $\beta$ is an algebra morphism. Indeed,
$\beta(\Gamma_1\cdot\Gamma_2)$ is the graphical building set
corresponding to the disjoint union $\Gamma_1\sqcup\Gamma_2$. It
contains all subsets $S\subset V(\Gamma_1)\sqcup V(\Gamma_2)$ such
that the restriction $(\Gamma_1\sqcup\Gamma_2)|_S$ is connected.
It is exactly the product of graphical building sets
$\mathcal{B}(\Gamma_1)\cdot\mathcal{B}(\Gamma_2)$. Recall that
$\mathcal{B}(\Gamma|_I)=\mathcal{B}(\Gamma)|_I$. Therefore,

$$\Delta(\beta(\Gamma))=\sum_{I\subset
V(\Gamma)}\mathcal{B}(\Gamma)|_I\otimes\mathcal{B}(\Gamma)|_{I^{c}}=\sum_{I\subset
V(\Gamma)}\mathcal{B}(\Gamma|_I)\otimes\mathcal{B}(\Gamma|_{I^{c}})=(\beta\otimes\beta)(\Delta(\Gamma)).$$
\noindent Thus, the map $\beta$ is a coalgebra morphism. Since the
graphical building set $\mathcal{B}(\Gamma)$ is discrete if and
only if the graph $\Gamma$ is discrete, we have
$\zeta\circ\beta=\zeta_{\mathcal{G}}$. The correspondence of
graphs and graphical building sets is bijective, so the map
$\beta$ is a monomorphism.

\end{proof}

It follows from Theorem $\ref{mono}$ that chromatic polynomials of
graphical building sets and chromatic polynomials of graphs
coincide

\[\chi(\mathcal{B}(\Gamma),m)=\chi(\Gamma,m).\] A classical
theorem of Stanley asserts that evaluating the chromatic
polynomial of a graph at $-1$ gives the number of acyclic
orientation $\cite{SSS}$. Therefore the $(-1)-$invariant
$(\ref{invariant})$ is a generalization of the number of acyclic
orientations to building sets.

An arbitrary collection of subsets
$\mathcal{H}\subset\mathcal{P}(X)$ of the ground set $X$ is called
a {\it hypergraph} on $X$. A proper coloring of a hypergraph
$\mathcal{H}$ is a map $f:X\rightarrow\mathbb{N}$, such that for
any $S\in\mathcal{H}$ with at least two elements, $f$ is not
monochromatic on $S$. The chromatic symmetric function of a
hypergraph is defined as

\[\Psi(\mathcal{H})=\sum_{{\it proper}
f:X\rightarrow\mathbb{N}}\prod_{i\in X}x_{f(i)}.\] It depends only
on minimal elements of a hypergraph, which form an antichain in
the boolean poset $\mathcal{P}(X)$. The specialization

\[\chi(\mathcal{H},m)=\Psi(\mathcal{H})(1^m)\] is the chromatic
polynomial, which counts the number of proper colorings of
$\mathcal{H}$ with $m$ colors.

The following theorem, which is a simple consequence of
definitions, shows in what extent the chromatic symmetric function
distinguishes building sets.

\begin{theorem}\label{hypergraph}

Let $\mathcal{C}_{\min}$ be the collection of minimal elements of
the generating collection $\mathcal{C}$ of a building set
$\mathcal{B}(\mathcal{C})$. Then

\[\Psi(\mathcal{B}(\mathcal{C}))=\Psi(\mathcal{C}_{\min}),\]where
$\Psi(\mathcal{C}_{\min})$ is the chromatic symmetric function of
the hypergraph $\mathcal{C}_{\min}$.

\end{theorem}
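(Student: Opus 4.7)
The plan is to reduce the statement to a bijection between proper colorings. First I would unpack the definition of $\Psi(\mathcal{B}_X)$ given just before the theorem: the paragraph preceding Definition~\ref{csf} establishes that $\zeta_\alpha(\mathcal{B}_X)$ counts ordered decompositions $X=J_1\sqcup\cdots\sqcup J_k$ with $|J_i|=a_i$ and $(\mathcal{B}_X)|_{J_i}$ discrete, and that such decompositions are in bijection with proper colorings $f\colon X\to\mathbb{N}$ with image of size $k$ and level sets of the prescribed sizes. Summing over $\alpha\models n$ and expanding each $M_\alpha$ as a sum of monomials, one obtains the monomial expansion
\[
\Psi(\mathcal{B}_X)=\sum_{f\colon X\to\mathbb{N}\text{ proper}}\prod_{i\in X}x_{f(i)},
\]
which is formally identical to the definition of $\Psi(\mathcal{H})$ for a hypergraph $\mathcal{H}$ given in the paragraph before the theorem.

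Given this, the theorem reduces to the combinatorial claim that a function $f\colon X\to\mathbb{N}$ is a proper coloring of the building set $\mathcal{B}(\mathcal{C})$ if and only if it is a proper coloring of the hypergraph $\mathcal{C}_{\min}$. For the forward implication I would simply note that every minimal element $T\in\mathcal{C}_{\min}$ lies in $\mathcal{C}\subseteq\mathcal{B}(\mathcal{C})$ and has at least two elements, so any coloring that is non-monochromatic on all non-singleton members of $\mathcal{B}(\mathcal{C})$ is automatically non-monochromatic on each $T\in\mathcal{C}_{\min}$.

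The converse is the substantive direction, and it is handled by an induction that mirrors the inductive construction $\mathcal{C}_0\subseteq\mathcal{C}_1\subseteq\cdots$ of $\mathcal{B}(\mathcal{C})$. Assume $f$ is proper for $\mathcal{C}_{\min}$, and let $S\in\mathcal{B}(\mathcal{C})$ have at least two elements; then $S\in\mathcal{C}_k$ for some $k\geq 0$. In the base case $k=0$ we have $S\in\mathcal{C}$, so $S$ contains some minimal element $T\in\mathcal{C}_{\min}$ (since $(\mathcal{C},\subseteq)$ is finite); by hypothesis $f$ is not monochromatic on $T$, hence not on $S$. In the inductive step $S=S'\cup S''$ with $S'\in\mathcal{C}_0$ and $S''\in\mathcal{C}_{k-1}$; the base case already gives that $f$ is non-monochromatic on $S'\subseteq S$, so it is non-monochromatic on $S$ as well. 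This shows $f$ is proper for $\mathcal{B}(\mathcal{C})$, completing the bijection of coloring sets and hence the equality of the two symmetric functions.

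I do not anticipate a real obstacle: the only subtlety is making sure the monomial expansion of $\Psi(\mathcal{B}_X)$ lines up literally with the hypergraph definition of $\Psi(\mathcal{C}_{\min})$, and that the induction is phrased against the correct filtration $\mathcal{C}_0\subseteq\mathcal{C}_1\subseteq\cdots$ from Section~3 rather than against some ambient partial order on $\mathcal{B}(\mathcal{C})$.
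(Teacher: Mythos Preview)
Your proposal is correct and follows essentially the same approach as the paper: both arguments reduce to the observation that the proper colorings of $\mathcal{B}(\mathcal{C})$ and of the hypergraph $\mathcal{C}_{\min}$ coincide, whence the symmetric functions agree. The paper states this equivalence in a single sentence without justification, whereas you supply the details (the monomial expansion of $\Psi(\mathcal{B}_X)$ and the containment argument via the filtration $\mathcal{C}_0\subseteq\mathcal{C}_1\subseteq\cdots$); note incidentally that your ``inductive step'' never actually invokes the inductive hypothesis, since the base case applied to $S'\in\mathcal{C}_0$ already suffices.
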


\begin{proof}

A coloring $f:X\rightarrow\mathbb{N}$ is a proper coloring of
$\mathcal{B}(\mathcal{C})$ if and only if $f$ is not monochromatic
on an arbitrary $S\in\mathcal{C}_{\min}$. Thus, the building set
$\mathcal{B}(\mathcal{C})$ and the hypergraph $\mathcal{C}_{\min}$
have the same sets of proper colorings, so their chromatic
symmetric functions are equal.

\end{proof}

\noindent Define the {\it minimalization} of a building set
$\mathcal{B}=\mathcal{B}(\mathcal{C})$ as the building set
$\check{\mathcal{B}}=\mathcal{B}(\mathcal{C}_{\min})$. By Theorem
$\ref{hypergraph}$, building sets $\mathcal{B}(\mathcal{C}^{'})$
and $\mathcal{B}(\mathcal{C}^{''})$, with the same
minimalizations, have the same chromatic symmetric functions

\[\Psi(\mathcal{B}(\mathcal{C}^{'}))=\Psi(\mathcal{B}(\mathcal{C}^{''})).\]

The fundamental property of chromatic polynomials of graphs is the
dele\-tion-contraction property

\[\chi(\Gamma,m)=\chi(\Gamma\setminus e,m)-\chi(\Gamma/e,m),\]
where $\Gamma\setminus e$ is $\Gamma$ with an edge $e\in
E(\Gamma)$ deleted and $\Gamma/e$ is $\Gamma$ with $e$ contracted
to a point. The deletion-contraction recurrence was proved for the
chromatic polynomial of a hypergraph in $\cite{GW}$. We state the
deletion-contraction property in the setting of building sets. Let
$S\in\mathcal{C}_{\min}$ be a minimal element of the generating
collection $\mathcal{C}$ of a building set
$\mathcal{B}=\mathcal{B}(\mathcal{C})$ on the ground set $X$. The
{\it deletion} is a building set
$\mathcal{B}(\mathcal{C}\setminus\{S\})$ on $X$, generated by the
collection $\mathcal{C}\setminus\{S\}$. We denote it by
$\mathcal{B}\setminus S$ without confusing with the set-theoretic
deletion. Denote by $X/S=X\setminus S\cup\{S\}$. For a subset
$A\subset X$ denote
by $A/S=\left\{\begin{array}{ll} A, & A\cap S=\emptyset \\
A\setminus S\cup\{S\}, & A\cap S\neq\emptyset\end{array}\right.,$
which is a subset of $X/S$. The {\it contraction} is a building
set $\mathcal{B}/S=\mathcal{B}(\mathcal{C}/S)$ on $X/S$, generated
by the collection $\mathcal{C}/S=\{A/S|A\in\mathcal{C}\}$.

\begin{figure}[!h]
  \begin{center}
    \includegraphics[width=0.7\textwidth]{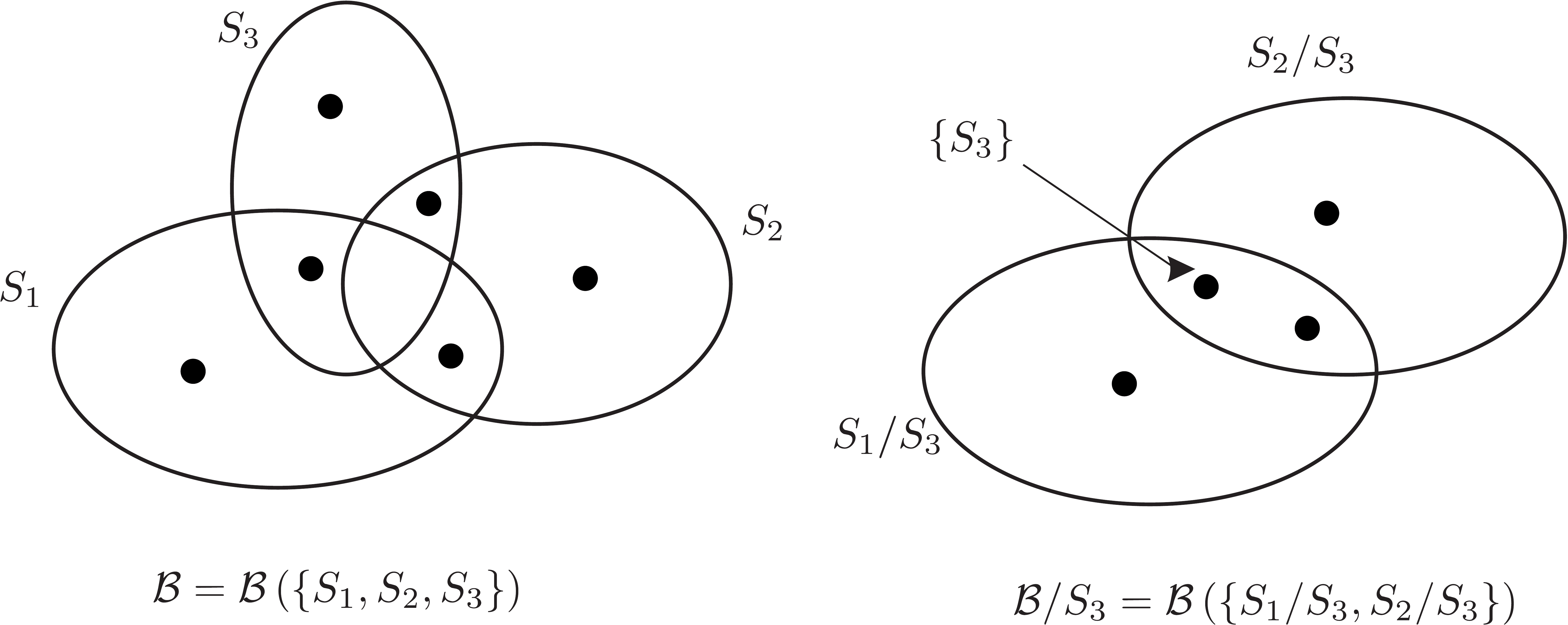}
  \end{center}
  \caption{\label{contr} The contraction of a building set}
\end{figure}

\begin{theorem}\label{del-contr}

Let $S\in\mathcal{C}_{\min}$ be a minimal element of the
generating collection $\mathcal{C}$ of a building set
$\mathcal{B}=\mathcal{B}(\mathcal{C})$. Then chromatic polynomials
of the building set $\mathcal{B}$ and its deletion
$\mathcal{B}\setminus S$ and contraction $\mathcal{B}/S$ are
related by

\[\chi(\mathcal{B},m)=\chi(\mathcal{B}\setminus S,m)-\chi(\mathcal{B}/S,m).\]

\end{theorem}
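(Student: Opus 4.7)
The plan is to prove the identity at the level of proper colorings and then specialize via Definition~\ref{cp}. By Theorem~\ref{hypergraph}, $\chi(\mathcal{B}(\mathcal{C}),m)$ counts maps $f\colon X\to[m]$ that are non-monochromatic on every minimal generator, equivalently on every element of $\mathcal{C}$ (since any member of $\mathcal{C}$ contains a minimal one, and a coloring non-monochromatic on a subset is non-monochromatic on any superset). So I will establish the set-theoretic identity
\[
\chi(\mathcal{B}\setminus S,m) \;=\; \chi(\mathcal{B},m) + \chi(\mathcal{B}/S,m),
\]
which rearranges to the stated formula.

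The first step splits the proper colorings of $\mathcal{B}\setminus S$ according to whether $f$ is monochromatic on $S$. A proper coloring of $\mathcal{B}\setminus S = \mathcal{B}(\mathcal{C}\setminus\{S\})$ is a map $f\colon X\to[m]$ that is non-monochromatic on every element of $\mathcal{C}\setminus\{S\}$. If, in addition, $f$ is non-monochromatic on $S$, then $f$ is exactly a proper coloring of $\mathcal{B}$; these contribute $\chi(\mathcal{B},m)$. Otherwise $f$ is monochromatic on $S$, and I will match these colorings bijectively to proper colorings of $\mathcal{B}/S$.

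The matching uses the natural bijection between colorings $f\colon X\to[m]$ monochromatic on $S$ and colorings $\bar f\colon X/S\to[m]$: set $\bar f(x)=f(x)$ for $x\in X\setminus S$ and take $\bar f(S)$ to be the common value of $f$ on $S$. Because $S\in\mathcal{C}_{\min}$ is minimal, no element of $\mathcal{C}$ is strictly contained in $S$, so the only $A\in\mathcal{C}$ with $|A/S|\leq 1$ is $A=S$ itself, and the generating collection $\mathcal{C}/S$ thus provides the nontrivial constraints indexed by $A\in\mathcal{C}\setminus\{S\}$. A short case analysis (treating $A\cap S=\emptyset$ and $A\cap S\neq\emptyset$ separately, using in the second case that $f$ is constant on $S$) shows that $\bar f$ takes the same set of values on $A/S$ as $f$ does on $A$, whence $\bar f$ is non-monochromatic on $A/S$ if and only if $f$ is non-monochromatic on $A$. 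The main subtle point, and the reason minimality of $S$ is essential, is precisely this clean dictionary between generators; without it some $A\subsetneq S$ in $\mathcal{C}$ would yield an unmatched constraint $A/S=A$ on the contraction side. With this dictionary in place the bijection identifies the second class of colorings with $\chi(\mathcal{B}/S,m)$, and the displayed identity follows.
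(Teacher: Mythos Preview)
Your proof is correct and takes the same route as the paper: split the proper $[m]$-colorings of $\mathcal{B}\setminus S$ according to whether they are monochromatic on $S$, identify the non-monochromatic class with proper colorings of $\mathcal{B}$, and put the monochromatic class in bijection with proper colorings of $\mathcal{B}/S$. One small slip in your closing aside: for $A\subsetneq S$ one has $A/S=\{S\}$ (a singleton), not $A$, so the hypothetical mismatched constraint would sit on the deletion side rather than the contraction side; this does not affect the argument, which correctly invokes minimality where it is needed.
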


\begin{proof}

Every proper coloring $f:X\rightarrow[m]$ of $\mathcal{B}$ is a
proper coloring of $\mathcal{B}\setminus S$. A proper coloring
$f:X\rightarrow[m]$ of $\mathcal{B}\setminus S$ is a proper
coloring of $\mathcal{B}$ if and only if $f(i)\neq f(j)$ for some
$i,j\in S$. The formula follows from the fact that the set of
proper colorings $f:X\rightarrow[m]$ of $\mathcal{B}\setminus S$
which are monochromatic on $S$ and the set of all proper colorings
of $\mathcal{B}/S$ have the same number of elements.

\end{proof}

\begin{definition}\label{fs}
Let $\mathcal{L}$ be a finite antichain of nonempty finite sets.
We say that a set $S\in\mathcal{L}$ is a {\it free set} of
$\mathcal{L}$ if $S\cap\cup(\mathcal{L}\setminus\{S\})$ has at
most one element.
\end{definition}

\noindent The next proposition is an immediate consequence of the
deletion-contra\-ction property of chromatic polynomials of
building sets.

\begin{proposition}\label{free}
Given a building set $\mathcal{B}=\mathcal{B}(\mathcal{C})$ and
its minimalization
$\check{\mathcal{B}}=\mathcal{B}(\mathcal{C}_{\min})$. If $S
\in\mathcal{C}_{\min}$ is a free set of $\mathcal{C}_{\min}$ then

(i) \ $\chi(\mathcal{B},m)=\chi(\check{\mathcal{B}}\setminus
S,m)(m^{|S|}-m)$ if
$S\cap\cup(\mathcal{C}_{\min}\setminus\{S\})=\emptyset$

(ii) $\chi(\mathcal{B},m)=\chi(\check{\mathcal{B}}\setminus
S,m)(m^{|S|-1}-1)$ if
$S\cap\cup(\mathcal{C}_{\min}\setminus\{S\})$ has one element.

\end{proposition}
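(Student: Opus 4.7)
The plan is to reduce the statement to the minimalization using Theorem~\ref{hypergraph} and then apply the deletion--contraction recurrence (Theorem~\ref{del-contr}) at the free set $S$. Since $\mathcal{B}=\mathcal{B}(\mathcal{C})$ and $\check{\mathcal{B}}=\mathcal{B}(\mathcal{C}_{\min})$ share a minimalization, Theorem~\ref{hypergraph} and the displayed identity following it give $\chi(\mathcal{B},m)=\chi(\check{\mathcal{B}},m)$, so it suffices to analyse the chromatic polynomial of $\check{\mathcal{B}}$. Because $S\in\mathcal{C}_{\min}$, the deletion--contraction formula yields
$$\chi(\check{\mathcal{B}},m)=\chi(\check{\mathcal{B}}\setminus S,m)-\chi(\check{\mathcal{B}}/S,m),$$
and the freeness hypothesis will make both terms factor through one common polynomial, which explains the product form in the statement.

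For case (i), I would argue that when $S$ is disjoint from every other generating set, every vertex of $S$ is isolated in $\check{\mathcal{B}}\setminus S$: it lies only in its own singleton. Consequently $\check{\mathcal{B}}\setminus S$ splits as the disjoint union of its restriction to $X\setminus S$ with the discrete building set on $S$, contributing a factor of $m^{|S|}$ to the chromatic polynomial. For the contraction, the new point of $X/S$ representing $S$ is likewise isolated, since no surviving generating set met $S$. Hence $\chi(\check{\mathcal{B}}/S,m)$ equals $m$ times the chromatic polynomial of the essential part of $\check{\mathcal{B}}\setminus S$ on $X\setminus S$. Substituting into the deletion--contraction formula extracts the common factor and leaves the difference $m^{|S|}-m$, as claimed.

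For case (ii), the unique shared vertex $v\in S\cap\bigcup(\mathcal{C}_{\min}\setminus\{S\})$ will serve as a pivot. The vertices of $S\setminus\{v\}$ are still isolated in $\check{\mathcal{B}}\setminus S$, so this deletion factors out $m^{|S|-1}$. On the other hand, the contraction $\check{\mathcal{B}}/S$, which collapses $S$ to a single point $*$, is naturally isomorphic to the restriction of $\check{\mathcal{B}}\setminus S$ to $(X\setminus S)\cup\{v\}$ under the identification $*\leftrightarrow v$: every set $A\in\mathcal{C}_{\min}\setminus\{S\}$ meets $S$ in at most the vertex $v$, so $A/S=A$ after this identification. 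Substituting these two evaluations into deletion--contraction produces the factor $m^{|S|-1}-1$.

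The main obstacle is checking that the generating collection $\mathcal{C}_{\min}/S$ produces no spurious unions under contraction; this is where the free-set hypothesis really bites, because without it a set meeting $S$ in two or more vertices would, after identification with $*$, still produce nontrivial combinatorics. Once this structural claim is granted, the remainder is routine bookkeeping with Theorems~\ref{hypergraph} and~\ref{del-contr}.
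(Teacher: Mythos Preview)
Your approach via Theorem~\ref{hypergraph} and Theorem~\ref{del-contr} is exactly what the paper intends; the paper offers no proof beyond the sentence that the proposition is ``an immediate consequence of the deletion--contraction property.'' However, your final bookkeeping does not actually establish the formula as printed. In case~(i), your own analysis gives $\chi(\check{\mathcal{B}}\setminus S,m)=m^{|S|}P(m)$ and $\chi(\check{\mathcal{B}}/S,m)=m\,P(m)$ with $P(m)=\chi\bigl((\check{\mathcal{B}}\setminus S)|_{X\setminus S},m\bigr)$, so deletion--contraction yields $\chi(\mathcal{B},m)=(m^{|S|}-m)\,P(m)$, \emph{not} $(m^{|S|}-m)\,\chi(\check{\mathcal{B}}\setminus S,m)$. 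The two differ by a factor of $m^{|S|}$; indeed a degree count already shows the identity as printed cannot hold, since both $\chi(\mathcal{B},m)$ and $\chi(\check{\mathcal{B}}\setminus S,m)$ are polynomials of degree $|X|$. Case~(ii) has the analogous discrepancy, with an extra factor $m^{|S|-1}$.

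This is not a flaw in your reasoning but an imprecision in the stated formula: the factor $\chi(\check{\mathcal{B}}\setminus S,m)$ should be replaced by the chromatic polynomial of the restriction of $\check{\mathcal{B}}\setminus S$ to the complement of the newly isolated vertices, which is exactly the ``essential part'' you identify. Your argument is complete and correct for that corrected statement, and the only subsequent use of the proposition (evaluating at $m=-1$ to get Lemma~\ref{oddfree}) is unaffected either way, since the extraneous power of $m$ is nonzero there. You should, however, flag the discrepancy explicitly rather than writing ``as claimed.''
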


In the following two theorems are given expansions of chromatic
symmetric functions of building sets in the power sum basis of
symmetric functions. These expansions are just restatements in the
setting of building sets of Stanley's expansions of chromatic
symmetric functions of hypergraphs $\cite{SS}$. They are analogues
to the power sum expansions of chromatic symmetric functions of
graphs $\cite{S}$. The derived formulas for the chromatic
polynomial of a building set extend the classical Whitney's
formulas for the chromatic polynomial of a graph $\cite{W}$.

\begin{theorem}\label{powersumm1}{\rm ($\cite{SS}$, \ Theorem \ 3.5)} Let $\mathcal{C}_{\min}$
be the collection of minimal elements of the generating collection
$\mathcal{C}$ of a building set
$\mathcal{B}=\mathcal{B}(\mathcal{C})$ on the ground set $X$. For
a subcollection $\mathcal{S}\subset\mathcal{C}_{\min}$, let
$\lambda(\mathcal{S})$ be the partition of $rank(\mathcal{B})$
whose parts are equal to the ranks of connected components of the
building set $\mathcal{B}(\mathcal{S})$ on $X$. Then

\[\Psi(\mathcal{B})=\sum_{\mathcal{S}\subset\mathcal{C}_{\min}}(-1)^{|\mathcal{S}|}p_{\lambda(\mathcal{S})}.\]

\end{theorem}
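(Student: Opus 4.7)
The plan is to reduce the claim to a statement about the hypergraph $\mathcal{C}_{\min}$ via Theorem \ref{hypergraph}, and then apply inclusion--exclusion over the edges of $\mathcal{C}_{\min}$. By Theorem \ref{hypergraph}, we have $\Psi(\mathcal{B}) = \Psi(\mathcal{C}_{\min})$, so it suffices to show
\[\Psi(\mathcal{C}_{\min}) = \sum_{\mathcal{S}\subset\mathcal{C}_{\min}}(-1)^{|\mathcal{S}|} p_{\lambda(\mathcal{S})}.\]
Starting from the definition $\Psi(\mathcal{C}_{\min}) = \sum_{f \text{ proper}} \prod_{i\in X} x_{f(i)}$, and writing ``proper'' as the complement of the union over $S\in\mathcal{C}_{\min}$ of the events $A_S = \{f : f \text{ is monochromatic on } S\}$, I would apply the standard inclusion--exclusion to obtain
\[\Psi(\mathcal{C}_{\min}) = \sum_{\mathcal{S}\subset\mathcal{C}_{\min}}(-1)^{|\mathcal{S}|} \sum_{f\in\bigcap_{S\in\mathcal{S}} A_S} \prod_{i\in X} x_{f(i)}.\]

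The second step is to identify the inner sum with $p_{\lambda(\mathcal{S})}$. The key observation is that a coloring $f\colon X\to\mathbb{N}$ lies in $\bigcap_{S\in\mathcal{S}} A_S$ exactly when $f$ is constant on each equivalence class of the transitive closure of the relation ``$x\sim y$ if $x,y\in S$ for some $S\in\mathcal{S}$''. I would then verify that these equivalence classes are precisely the connected components (i.e.\ maximal elements) of the building set $\mathcal{B}(\mathcal{S})$ on $X$: indeed, two elements lie in a common maximal set of $\mathcal{B}(\mathcal{S})$ iff they can be joined by a chain of overlapping sets from $\mathcal{S}$, by the inductive construction $\mathcal{C}_0,\mathcal{C}_1,\ldots$ in Section 3, while isolated vertices form their own singleton components. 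This identification gives the sizes of the components as the parts of $\lambda(\mathcal{S})$.

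Finally, if the connected components of $\mathcal{B}(\mathcal{S})$ have sizes $\lambda_1,\ldots,\lambda_m$, the colorings constant on each component contribute independently, yielding
\[\sum_{f\in\bigcap_{S\in\mathcal{S}} A_S}\prod_{i\in X} x_{f(i)} = \prod_{j=1}^{m}\Bigl(\sum_{n\geq 1} x_n^{\lambda_j}\Bigr) = \prod_{j=1}^{m} p_{\lambda_j} = p_{\lambda(\mathcal{S})},\]
and substitution into the inclusion--exclusion sum yields the theorem.

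The only genuine subtlety is the component identification in the second paragraph; the inclusion--exclusion and the factorization in the third paragraph are routine once that correspondence is in hand. Since the underlying statement is already proved in \cite{SS} for hypergraphs, I would frame the argument as a transcription to the building set language, emphasizing that the partition $\lambda(\mathcal{S})$ is intrinsically defined via the building set closure $\mathcal{B}(\mathcal{S})$ rather than via an ad hoc notion of hypergraph connectivity.
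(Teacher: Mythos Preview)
Your argument is correct; the inclusion--exclusion over the events $A_S$, the identification of the equivalence classes with the maximal elements of $\mathcal{B}(\mathcal{S})$, and the factorization into $p_{\lambda(\mathcal{S})}$ are all sound. Note, however, that the paper does not actually give a proof of this theorem: it is stated with the attribution ``(\cite{SS}, Theorem 3.5)'' and the surrounding text says explicitly that the two power-sum expansions ``are just restatements in the setting of building sets of Stanley's expansions of chromatic symmetric functions of hypergraphs.'' So there is no paper-proof to compare against --- the paper's strategy is precisely to invoke Theorem~\ref{hypergraph} and then cite Stanley, which is exactly what your first paragraph does. What you have written in the remaining paragraphs is a clean reconstruction of the standard inclusion--exclusion proof that lies behind Stanley's result, adapted to the building-set vocabulary; it supplies the details the paper elects to omit.
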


The principal specializations of the power sum symmetric functions
are given by $p_{\lambda}(1^m)=m^{|\lambda|},$ where $|\lambda|$
is the size of a partition $\lambda$. For a subcollection
$\mathcal{S}\subset\mathcal{C}_{\min}$ denote by
$c(\mathcal{S})=|\lambda(\mathcal{S})|$, the number of connected
components of the building set $\mathcal{B}(\mathcal{S})$ on the
ground set $X$. We obtain the following formula for the chromatic
polynomials of building sets

\[\chi(\mathcal{B},m)=\sum_{\mathcal{S}\subset\mathcal{C}_{\min}}(-1)^{|\mathcal{S}|}m^{c(\mathcal{S})},\]

\noindent which is also a direct consequence of the
deletion-contraction property. It gives the following
interpretation of the $(-1)-$invariant of building sets
\begin{equation}\label{invariant1}
\chi(\mathcal{B},-1)=\sum_{\mathcal{S}\subset\mathcal{C}_{\min}}(-1)^{|\mathcal{S}|+c(\mathcal{S})}.
\end{equation}

Let $\mathcal{L}=\{S_1,\ldots,S_m\}$ be an antichain of the
boolean poset $\mathcal{P}(X)$. Denote by
$\mathcal{L}_I=\{S_i|i\in I\}$ the subcollection of $\mathcal{L}$
determined by a subset $I\subset[m]$. The {\it intersection poset}
$P(\mathcal{L})$ of the collection $\mathcal{L}$ is the set
$P(\mathcal{L})=\{I\subset[m]|\cap\mathcal{L}_I\neq\emptyset\}$
ordered by inclusion.

\begin{proposition}\label{parity}

Let $\mathcal{L}^{'}=\{S_1^{'},\ldots,S_m^{'}\}$ and
$\mathcal{L}^{''}=\{S_1^{''},\ldots,S_m^{''}\}$ be antichains of
finite sets with the same intersection poset
$P(\mathcal{L}^{'})=P(\mathcal{L}^{''})$. If
$|\cap\mathcal{L}^{'}_I|=|\cap\mathcal{L}^{''}_I| \ \mbox{mod} \
2$ for all $I\in P$ then

\[\chi(\mathcal{B}(\mathcal{L}^{'}),-1)=\chi(\mathcal{B}(\mathcal{L}^{''}),-1).\]

\end{proposition}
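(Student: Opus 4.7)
My plan is to apply the $(-1)$-invariant formula (\ref{invariant1}) to both $\mathcal{B}(\mathcal{L}')$ and $\mathcal{B}(\mathcal{L}'')$ and show that every summand depends only on the data preserved by the hypothesis. Write $X=\bigcup\mathcal{L}$ for the ground set of $\mathcal{B}(\mathcal{L})$ and, for $J\subset[m]$, $\mathcal{L}_J=\{S_i:i\in J\}$. The main task is to express the component count $c(\mathcal{L}_J)$ of $\mathcal{B}(\mathcal{L}_J)$ on $X$ using only $P(\mathcal{L})$ and the parities $|\cap\mathcal{L}_I|\bmod 2$ for $I\in P(\mathcal{L})$.

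First I would give a structural decomposition of $c(\mathcal{L}_J)$. Setting $X_J=\bigcup_{i\in J}S_i$, each element of $X\setminus X_J$ is a singleton component of $\mathcal{B}(\mathcal{L}_J)$, while the components contained in $X_J$ are in bijection with the connected components of the graph $G_\mathcal{L}|_J$ on vertex set $J$ whose edges are the pairs $\{i,j\}$ with $S_i\cap S_j\neq\emptyset$. Calling this latter count $c_0(J)$, I obtain
\[c(\mathcal{L}_J)=c_0(J)+|X|-|X_J|.\]
Since $\{i,j\}\in P(\mathcal{L})$ if and only if $S_i\cap S_j\neq\emptyset$, the graph $G_\mathcal{L}$ is encoded in $P(\mathcal{L})$, so the function $J\mapsto c_0(J)$ coincides for $\mathcal{L}'$ and $\mathcal{L}''$.

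Next I would handle $|X_J|$ by inclusion-exclusion. Since $\cap\mathcal{L}_I=\emptyset$ whenever $I\notin P(\mathcal{L})$,
\[|X_J|=\sum_{\emptyset\neq I\subset J,\ I\in P(\mathcal{L})}(-1)^{|I|+1}|\cap\mathcal{L}_I|,\]
so $|X_J|\bmod 2$ is determined by $P(\mathcal{L})$ together with the hypothesized parities; the special case $J=[m]$ also yields $|X'|\equiv|X''|\pmod 2$. Substituting these facts into (\ref{invariant1}) rewrites
\[\chi(\mathcal{B}(\mathcal{L}),-1)=(-1)^{|X|}\sum_{J\subset[m]}(-1)^{|J|+c_0(J)+|X_J|},\]
and every factor on the right matches term by term for $\mathcal{L}'$ and $\mathcal{L}''$, yielding the desired equality.

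The main obstacle is the first step: recognizing that, although $c(\mathcal{L}_J)$ genuinely depends on set cardinalities, it splits cleanly into a pairwise-connectivity invariant $c_0(J)$ captured by the $2$-element sets of $P(\mathcal{L})$, plus a cardinality correction $|X|-|X_J|$ whose parity is controlled by inclusion-exclusion over $P(\mathcal{L})$. Once that decomposition is in hand, the hypothesis is engineered exactly to neutralize the only remaining source of parity ambiguity.
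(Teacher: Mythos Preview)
Your proof is correct and follows essentially the same route as the paper: decompose $c(\mathcal{L}_J)=c_0(J)+|X|-|X_J|$, observe that $c_0(J)$ is determined by the pairwise intersections recorded in $P(\mathcal{L})$, and handle $|X_J|$ modulo $2$ by inclusion--exclusion over the nonempty intersections. The paper's version is terser (it asserts without elaboration that the resulting expression depends only on $P(\mathcal{L})$ and the parities), while you spell out explicitly that $c_0(J)$ comes from the intersection graph; otherwise the arguments coincide.
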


\begin{proof}

Given an antichain of finite sets
$\mathcal{L}=\{S_1,\ldots,S_m\}$, denote by $S_I=\cap_{i\in I}S_i$
and $X_I=\cup_{i\in I}S_i$ for all $I\subset[m]$. Let $c(I)$ and
$c_I$ be the numbers of connected components of building sets
$\mathcal{B}(\mathcal{L}_I)$ on the ground sets $X_{[m]}$ and
$X_I$ respectively. Since $c(I)=c_I+|X_{[m]}|-|X_I|$, by
inclusion-exclusion, we obtain
\[c(I)=c_I+|X_{[m]}|+\sum_{J\subset I}(-1)^{|J|}|S_J|.\] By formula
$(\ref{invariant1})$, we have that
$\chi(\mathcal{B}(\mathcal{L}),-1)=\sum_{I\subset[m]}(-1)^{|I|+c(I)}$,
which depends only on the intersection poset $P(\mathcal{L})$ and
the parity of $|S_I|$ for all $I\in P(\mathcal{L})$.

\end{proof}

To a building set $\mathcal{B}=\mathcal{B}(\mathcal{C})$ on the
ground set $X$ is associated the lattice $L_{\mathcal{B}}$ of
connected partitions of its minimalization
$\check{\mathcal{B}}=\mathcal{B}(\mathcal{C}_{\min})$. For a
partition $\pi=\{A_1,A_2,\ldots,A_k\}$ of $X$ let $type(\pi)$ be a
partition of $|X|$ whose components are sizes of blocks. A
partition $\pi$ of $X$ is said to be connected if the restrictions
to blocks $\check{\mathcal{B}}|_A, \ A\in\pi$ are connected as
building sets. A set of all connected partitions is ordered by
refinement of partitions, with the unique minimal element
$\widehat{0}$, which is the partition of $X$ in one-element
blocks. Denote by $|\pi|$ the number of blocks of a partition
$\pi$. Let $\mu$ be the Moebius function of a lattice
$L_\mathcal{B}$.

\begin{theorem}\label{powersumm2}{\rm ($\cite{SS}$, \ Theorem \ 3.4)} Let $L_{\mathcal{B}}$
be the lattice of connected partitions of the building set
$\mathcal{B}(\mathcal{C}_{\min})$ associated to a building set
$\mathcal{B}=\mathcal{B}(\mathcal{C})$. Then

\[\Psi(\mathcal{B})=\sum_{\pi\in
L_{\mathcal{B}}}\mu(\widehat{0},\pi)p_{type(\pi)}.\]

\end{theorem}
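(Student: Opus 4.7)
The plan is to take the expansion from Theorem \ref{powersumm1} and reorganize it by grouping the subcollections $\mathcal{S}\subset\mathcal{C}_{\min}$ according to the partition of $X$ they induce, then identify the coefficient of each $p_{type(\pi)}$ with $\mu(\widehat{0},\pi)$ by Möbius inversion on $L_{\mathcal{B}}$.

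For each $\mathcal{S}\subset\mathcal{C}_{\min}$, let $\pi(\mathcal{S})$ denote the partition of $X$ whose blocks are the ground sets of the connected components of the building set $\mathcal{B}(\mathcal{S})$. By construction each block carries a connected restriction of $\check{\mathcal{B}}=\mathcal{B}(\mathcal{C}_{\min})$, so $\pi(\mathcal{S})\in L_{\mathcal{B}}$, and $\lambda(\mathcal{S})=type(\pi(\mathcal{S}))$. Setting
\[f(\pi)=\sum_{\mathcal{S}\subset\mathcal{C}_{\min}:\,\pi(\mathcal{S})=\pi}(-1)^{|\mathcal{S}|},\]
Theorem \ref{powersumm1} rewrites as $\Psi(\mathcal{B})=\sum_{\pi\in L_{\mathcal{B}}}f(\pi)\,p_{type(\pi)}$, so the task reduces to proving $f(\pi)=\mu(\widehat{0},\pi)$.

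To see this, I compute the zeta-transform $g(\pi):=\sum_{\sigma\leq\pi}f(\sigma)$. The condition $\pi(\mathcal{S})\leq\pi$ is equivalent to every $S\in\mathcal{S}$ being contained in some block of $\pi$, so $g(\pi)=\sum_{\mathcal{S}\subset\mathcal{C}_{\min}(\pi)}(-1)^{|\mathcal{S}|}$, where $\mathcal{C}_{\min}(\pi)$ collects those members of $\mathcal{C}_{\min}$ that lie inside a single block of $\pi$. This alternating sum equals $1$ if $\mathcal{C}_{\min}(\pi)=\emptyset$ and $0$ otherwise. The main combinatorial step is the claim that $\mathcal{C}_{\min}(\pi)=\emptyset$ precisely when $\pi=\widehat{0}$: sets in $\mathcal{C}_{\min}$ have at least two elements and cannot fit in any singleton block of $\widehat{0}$, while if $\pi>\widehat{0}$ some block $A$ with $|A|\geq 2$ must appear, and since $\check{\mathcal{B}}|_A$ is connected the set $A$ lies in $\mathcal{B}(\mathcal{C}_{\min})$; the inductive description of $\mathcal{B}(\mathcal{C}_{\min})$ from Section 3 then forces some $S\in\mathcal{C}_{\min}$ to be contained in $A$. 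Granting this, $g(\pi)=[\pi=\widehat{0}]$, and classical Möbius inversion on $L_{\mathcal{B}}$,
\[f(\pi)=\sum_{\sigma\leq\pi}\mu(\sigma,\pi)g(\sigma)=\mu(\widehat{0},\pi),\]
finishes the proof. I expect that combinatorial claim to be the only nontrivial point; everything else is bookkeeping on top of Theorem \ref{powersumm1}.
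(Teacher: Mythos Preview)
The paper does not supply its own proof of this theorem; both Theorem~\ref{powersumm1} and Theorem~\ref{powersumm2} are quoted from Stanley~\cite{SS} as restatements in the building-set language, without argument. So there is nothing to compare against directly.

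Your derivation is correct. Grouping the subcollections $\mathcal{S}\subset\mathcal{C}_{\min}$ by the induced partition $\pi(\mathcal{S})$ and computing the zeta-transform $g(\pi)=\sum_{\sigma\leq\pi}f(\sigma)$ is exactly the standard route from the ``broken-circuit'' style expansion of Theorem~\ref{powersumm1} to the M\"obius expansion of Theorem~\ref{powersumm2}. The two points that need checking both go through: (i) $\pi(\mathcal{S})$ is always a connected partition of $\check{\mathcal{B}}$, since each block $A$ of $\pi(\mathcal{S})$ is a maximal element of $\mathcal{B}(\mathcal{S})\subset\check{\mathcal{B}}$ and hence $A\in\check{\mathcal{B}}$; and (ii) the claim $\mathcal{C}_{\min}(\pi)=\emptyset\Leftrightarrow\pi=\widehat{0}$, which you justify correctly via the inductive description $\mathcal{B}(\mathcal{C}_{\min})=\bigcup_{k\geq0}\mathcal{C}_k$: any non-singleton $A\in\check{\mathcal{B}}$ lies in some $\mathcal{C}_k$, and for $k\geq1$ one has $A=S\cup S'$ with $S\in\mathcal{C}_0=\mathcal{C}_{\min}$, so $S\subset A$. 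The M\"obius inversion then gives $f(\pi)=\mu(\widehat{0},\pi)$ as claimed.
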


\noindent By the principal specialization of the power sum
symmetric functions, we obtain from Theorem $\ref{powersumm2}$ the
following interpretation of the chromatic polynomial and $(-1)-$
invariant of a building set

\[\chi(\mathcal{B},m)=\sum_{\pi\in
L_{\mathcal{B}}}\mu(\widehat{0},\pi)m^{|\pi|},\]

\[\chi(\mathcal{B},-1)=\sum_{\pi\in
L_{\mathcal{B}}}\mu(\widehat{0},\pi)(-1)^{|\pi|}.\]

\section{Symmetric functions of graphs derived from building sets}

To a simple graph $\Gamma=(V,E)$ and an integer $n\geq2$ we
associate a collection of sets $\mathcal{C}_{\Gamma,n}$ in the
following way. To an edge $e\in E$ we associate a set of new
objects $\{e_1,\ldots,e_{n-2}\}$. Define
$S_e=\{u,v,e_1,\ldots,e_{n-2}\}$, where $e=\{u,v\}\in E$ is an
edge on vertices $u,v\in V$. The collection
$\mathcal{C}_{\Gamma,n}=\{S_e|e\in E\}$ is an antichain on the
ground set $X=V\cup\{e_i | e\in E, i\in[n-2]\}$. It generates
uniquely the building set
$\mathcal{B}_{\Gamma,n}=\mathcal{B}(\mathcal{C}_{\Gamma,n})$ on
$X$, see Fig. $\ref{fig:1}$.

\begin{figure}[!h]
  \begin{center}
    \includegraphics[width=0.7\textwidth]{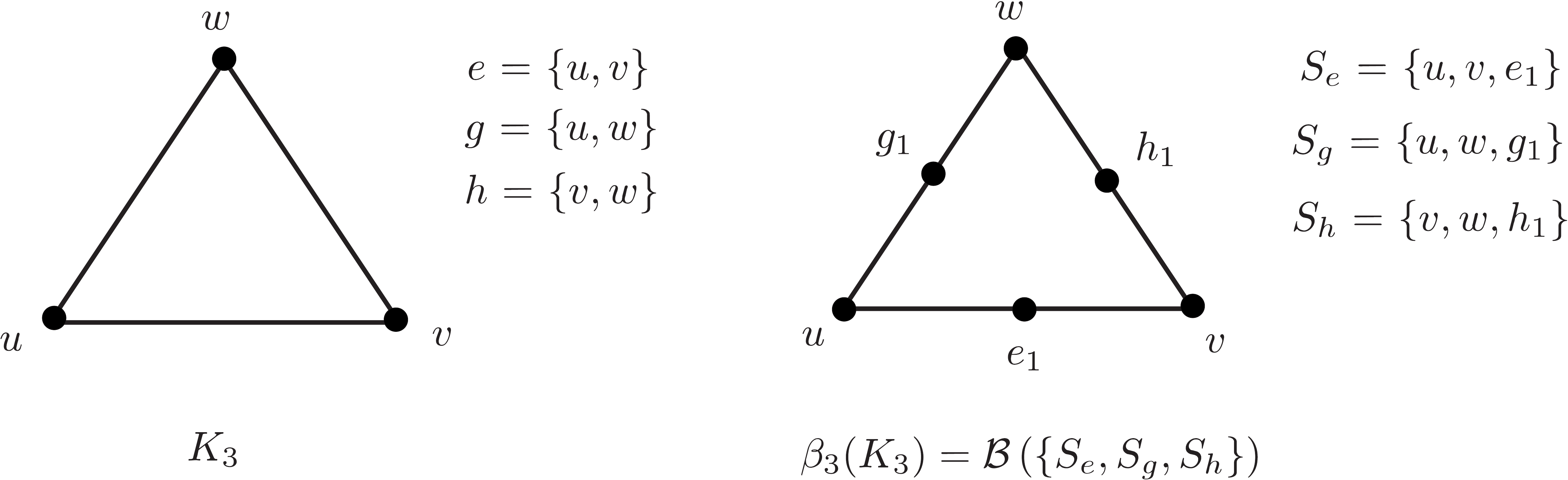}
  \end{center}
  \caption{\label{fig:1} The building set $\beta_3 (K_3)$}
\end{figure}

Define a sequence of maps $\beta_n:\mathcal{G}\rightarrow BSet, \
n\geq2$, by $\beta_n(\Gamma)=\mathcal{B}_{\Gamma,n}$, for
$\Gamma\in\mathcal{G}$. Note that $\beta_2$ is the Hopf algebra
monomorphism $\beta$ in Theorem $\ref{mono}$. It is clear from the
construction that all $\beta_n$ are algebra monomorphisms. To each
graph $\Gamma$ we associate chromatic symmetric functions
$\Psi(\beta_n(\Gamma))$ of building sets $\beta_n(\Gamma)$ and
corresponding chromatic polynomials $\chi(\beta_n(\Gamma),m)$. We
obtain a sequence of multiplicative invariants of graphs.

By Proposition $\ref{parity}$, if $n_1=n_2 \ \mbox{mod} \ 2$ then
$\chi(\beta_{n_1}(\Gamma),-1)=\chi(\beta_{n_2}(\Gamma),-1)$ for
any simple graph $\Gamma$. We obtain two numerical multiplicative
invariants of graphs, derived from $(-1)-$invariant of building
sets, namely $\chi(\beta_2(\Gamma),-1)$ and
$\chi(\beta_3(\Gamma),-1)$.

Denote by $c(S)$ the number of connected components of the
spanning subgraph $(V,S)$ with edge set $S\subset E$. The
correspondence between subsets $S\subset E$ of edges of a graph
$\Gamma$ and subcollections $\mathcal{S}=\{S_e|e\in S\}$ of the
generating collection $\mathcal{C}_{\Gamma,n}$ of the building set
$\beta_n(\Gamma)$ is bijective.  From the construction, we have

\[c(\mathcal{S})=c(S)+(n-2)(|E|-|S|),\]where $c(\mathcal{S})$ is
the number of connected components of the building set
$\mathcal{B}(\mathcal{S})$. For $n=2,3$ it follows from
$(\ref{invariant1})$ that

\[
\chi(\beta_2(\Gamma),-1)=\sum_{S\subset E}(-1)^{|S|+c(S)},
\]

\[
\chi(\beta_3(\Gamma),-1)=\sum_{S\subset E}(-1)^{|E|+c(S)}.
\]

\noindent These formulas appears to be evaluations of the Tutte
polynomial of a graph $\Gamma$
\[T_{\Gamma}(x,y)=\sum_{S\subset
E}(x-1)^{c(S)-c(E)}(y-1)^{c(S)+|S|-|V|}.\] We obtain
$\chi(\beta_2(\Gamma),-1)=(-1)^{|V|}T_{\Gamma}(2,0)$ and
$\chi(\beta_3(\Gamma),-1)=(-1)^{|E|+c(E)}T_{\Gamma}(0,2).$ The
combinatorial interpretation of these invariants is well known.
The values $T_{\Gamma}(2,0)$ and $T_{\Gamma}(0,2)$ are the numbers
of acyclic and of totally cyclic orientations of $\Gamma$ (see,
e.g. $\cite{BO}$). Recall that an oriented graph is acyclic if it
contains no directed cycles and that it is totally cyclic if every
edge is contained in some directed cycle.

\begin{example}

In $\cite{S}$, Stanley gave the example of nonisomorphic
five-vertex graphs that have the same chromatic symmetric
function, Fig. $\ref{fig:2}$. By direct calculation we obtain

\[\Psi(\beta_3(\Gamma_1))-\Psi(\beta_3(\Gamma_2))=-p_{5,3,1,1,1}+p_{6,3,1,1}+p_{7,1,1,1,1}-2p_{8,1,1,1}+2p_{9,1,1}-p_{10,1}.\]
The invariant $\chi(\beta_3(\Gamma),-1)$, derived from the
chromatic symmetric function $\Psi(\beta_3(\Gamma))$,
distinguishes those graphs.

\end{example}

\begin{figure}[!h]
  \begin{center}
    \includegraphics[width=0.6\textwidth]{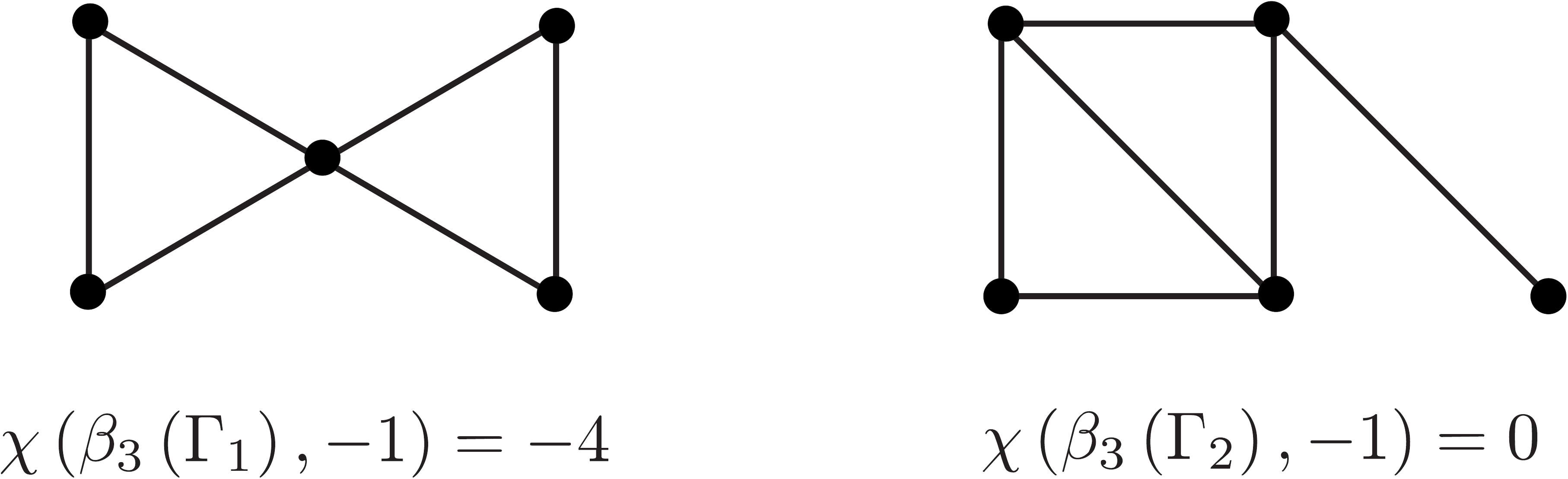}
  \end{center}
  \caption{\label{fig:2} Stanley's example of graphs with the same chromatic symmetric function}
\end{figure}

\begin{remark}

It is natural to ask in what extant chromatic symmetric functions
$\Psi(\beta_n(\Gamma))$ distinguish graphs. The previous example
shows that they are at least incomparable with the chromatic
symmetric function of a graph.

\end{remark}

\begin{remark}

The invariant $\chi(\beta_3(\Gamma),-1)$ takes values of both
signs on graphs of the same rank, in the range
$-2^{|E|}<\chi(\beta_3(\Gamma),-1)<2^{|E|}$. Hence there is not an
analogous result to the reciprocity theorem for the chromatic
polynomials of graphs, $\cite{SSSS}$.

\end{remark}

\section{Generalized Dehn-Sommerville relations for building sets}
\label{sec:5}

Let $S_{-}(\mathcal{H},\zeta)$ be the odd subalgebra of the
combinatorial Hopf algebra $(\mathcal{H},\zeta)$. It is proved in
$(\cite{ABS}, {\rm Theorem} \ 5.3.)$ that a homogeneous element
$h\in\mathcal{H}$ belongs to $S_{-}(\mathcal{H},\zeta)$ if and
only if

\begin{equation}\label{gdsr}
(id\otimes(\overline{\zeta}-\zeta^{-1})\otimes
id)\circ\Delta^{(2)}(h)=0.
\end{equation}
We refer to the previous equation as the generalized
Dehn-Sommerville relations for the combinatorial Hopf algebra
$(\mathcal{H},\zeta)$.

For the Hopf algebra of quasi-symmetric functions
$(QSym,\zeta_Q)$, these relations are described in $(\cite{ABS},
{\rm Example} \ 5.10.)$ as follows. Let $h=\sum_{\alpha\models
n}f_\alpha(h)M_\alpha$ be a homogeneous element of order
$n\in\mathbb{N}$. It satisfies the generalized Dehn-Sommerville
relations if and only if

\begin{equation}\label{bbr}
\sum_{j=0}^{a_i}(-1)^{j}f_{(a_1,\ldots,a_{i-1},j,a_i-j,a_{i+1},\ldots,a_k)}(h)=0,
\end{equation}
for each composition $\alpha=(a_1,\ldots,a_k)\models n$ and
$i\in\{1,\ldots,k(\alpha)\}$. By $k(\alpha)$ is denoted the number
of parts of the composition $\alpha$. It is understood that zero
parts in compositions are omitted. We refer to the relations
$(\ref{bbr})$ as the Bayer-Billera relations $\cite{BB}$.

Denote by $\mathcal{H}^{cop}$ the coopposite Hopf algebra of a
Hopf algebra $\mathcal{H}$ (see $\cite{DNR}$ as the general
reference for Hopf algebras). It is defined by the coproduct
$\Delta^{cop}=\tau\circ\Delta$, where
$\tau:\mathcal{H}\otimes\mathcal{H}\rightarrow\mathcal{H}\otimes\mathcal{H}$
is the twist map $\tau(x\otimes y)=y\otimes x, \
x,y\in\mathcal{H}$. Let $\mathcal{H}$ be a commutative Hopf
algebra. A map $\phi:\mathcal{H}\rightarrow\mathcal{H}$ is an
antimorphism of $\mathcal{H}$ if it is a morphism of Hopf algebras
$\phi:\mathcal{H}\rightarrow\mathcal{H}^{cop}$.

\begin{lemma}\label{l1}

Let $(\mathcal{H},\zeta)$ be a commutative combinatorial Hopf
algebra. Then the antipode
$S_\mathcal{H}:(\mathcal{H}^{cop},\zeta^{-1})\rightarrow(\mathcal{H},\zeta)$
is a morphism of combinatorial Hopf algebras.

\end{lemma}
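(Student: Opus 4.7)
The plan is to verify the three ingredients that make $S_\mathcal{H}$ a morphism of combinatorial Hopf algebras in the stated direction: it is a graded Hopf algebra map $\mathcal{H}^{cop}\to\mathcal{H}$, and it intertwines the characters $\zeta^{-1}$ and $\zeta$. Each piece will follow by combining a standard antipode identity with the commutativity hypothesis.

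First I would recall two universal identities for the antipode in any Hopf algebra: the anti-algebra property $S_\mathcal{H}(xy)=S_\mathcal{H}(y)S_\mathcal{H}(x)$ and the anti-coalgebra property $\Delta\circ S_\mathcal{H}=(S_\mathcal{H}\otimes S_\mathcal{H})\circ\tau\circ\Delta=(S_\mathcal{H}\otimes S_\mathcal{H})\circ\Delta^{cop}$. Commutativity of $\mathcal{H}$ turns the first into an honest algebra morphism $S_\mathcal{H}(xy)=S_\mathcal{H}(x)S_\mathcal{H}(y)$, and the second is precisely the statement that $S_\mathcal{H}\colon \mathcal{H}^{cop}\to\mathcal{H}$ is a coalgebra morphism. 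Together with the fact that the antipode of a graded connected Hopf algebra preserves the grading (it is built from convolution powers of $\mathrm{id}-\epsilon$ via Takeuchi's formula, each of which is graded), we conclude that $S_\mathcal{H}$ is a graded Hopf algebra morphism from $\mathcal{H}^{cop}$ to $\mathcal{H}$.

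Next I would check the character condition. By the formula for the convolution inverse recalled in the paragraph on characters, $\zeta^{-1}=\zeta\circ S_\mathcal{H}$, so trivially $\zeta\circ S_\mathcal{H}=\zeta^{-1}$, which is exactly the intertwining required of a morphism of combinatorial Hopf algebras. For this to make sense I should briefly note that $\zeta^{-1}$ is itself a character on $\mathcal{H}^{cop}$: the underlying algebra of $\mathcal{H}^{cop}$ coincides with that of $\mathcal{H}$, and on the commutative algebra $\mathcal{H}$ the composite $\zeta\circ S_\mathcal{H}$ is multiplicative because both $S_\mathcal{H}$ (by the previous step) and $\zeta$ are.

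There is no real obstacle here; the entire argument is a packaging of textbook antipode identities together with the hypothesis that $\mathcal{H}$ is commutative. The only point that deserves care is keeping straight which side carries the co-opposite structure, so that the identity $\Delta\circ S_\mathcal{H}=(S_\mathcal{H}\otimes S_\mathcal{H})\circ\Delta^{cop}$ is read in the correct direction, namely as asserting that $S_\mathcal{H}$ is a coalgebra map out of $\mathcal{H}^{cop}$ rather than into it.
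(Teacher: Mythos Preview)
Your proof is correct and follows essentially the same approach as the paper: use that the antipode is an anti-coalgebra map (hence a coalgebra morphism $\mathcal{H}^{cop}\to\mathcal{H}$), invoke commutativity to make it an algebra morphism, and then read off the character condition from $\zeta^{-1}=\zeta\circ S_\mathcal{H}$. You simply spell out more details (the grading, the multiplicativity of $\zeta^{-1}$) than the paper's terse two-line argument.
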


\begin{proof}

The antipode $S_\mathcal{H}:\mathcal{H}\rightarrow\mathcal{H}$ is
an antimorphism of coalgebras. As $\mathcal{H}$ is commutative, we
have that $S_\mathcal{H}:\mathcal{H}^{cop}\rightarrow\mathcal{H}$
is a morphism of Hopf algebras. Since the inverse of a character
$\zeta$ is the composition $\zeta^{-1}=\zeta\circ S_\mathcal{H}$,
the claim follows.

\end{proof}

\begin{lemma}\label{l2}

Let $S_Q$ and $S_B$ be antipodes of Hopf algebras $QSym$ and
$BSet$ respectively. Then the following diagram is a commuting
diagram of morphisms of combinatorial Hopf algebras:

$$\begin{array}{ccc}
  (BSet^{cop},\zeta^{-1}) & \stackrel{S_B}\longrightarrow  &(BSet,\zeta)  \\
   \downarrow\Psi &   & \downarrow\Psi  \\
   (QSym^{cop},\zeta_Q^{-1})&\stackrel{S_Q}\longrightarrow  &(QSym,\zeta_Q)
\end{array}$$

\end{lemma}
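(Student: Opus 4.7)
The plan is to verify that each of the four maps in the diagram is a morphism of combinatorial Hopf algebras, and then to invoke the universality of $(QSym,\zeta_Q)$ to conclude commutativity.

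First I would dispose of the two horizontal maps. Both $BSet$ and $QSym$ are commutative Hopf algebras (the former by the construction in Section 3, the latter classically), so Lemma \ref{l1} applies and gives that $S_B:(BSet^{cop},\zeta^{-1})\to(BSet,\zeta)$ and $S_Q:(QSym^{cop},\zeta_Q^{-1})\to(QSym,\zeta_Q)$ are morphisms of combinatorial Hopf algebras.

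Next I would handle the two vertical copies of $\Psi$. The lower one is the canonical morphism guaranteed by Theorem 4.1 of \cite{ABS}. For the upper one, the Hopf algebra morphism $\Psi:BSet\to QSym$ is automatically also a Hopf algebra morphism $BSet^{cop}\to QSym^{cop}$: the coopposite operation leaves the algebras unchanged and twists the coproducts by $\tau$, and the identity $(\Psi\otimes\Psi)\circ\tau=\tau\circ(\Psi\otimes\Psi)$ combined with the coalgebra property of $\Psi$ gives compatibility with $\Delta^{cop}$. For the character compatibility $\zeta_Q^{-1}\circ\Psi=\zeta^{-1}$, I would use that pullback of characters along any Hopf algebra morphism is a group homomorphism between the respective convolution character groups (a short direct check using the coalgebra property); applying this to $\zeta_Q\circ\Psi=\zeta$ yields $\zeta_Q^{-1}\circ\Psi=(\zeta_Q\circ\Psi)^{-1}=\zeta^{-1}$.

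Finally, commutativity. At this point $\Psi\circ S_B$ and $S_Q\circ\Psi$ are each compositions of morphisms of combinatorial Hopf algebras, hence both are morphisms of combinatorial Hopf algebras from $(BSet^{cop},\zeta^{-1})$ to $(QSym,\zeta_Q)$. By the uniqueness clause of Theorem 4.1 of \cite{ABS} there is only one such morphism, so the two compositions must coincide.

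The main potential obstacle is simply bookkeeping: tracking which Hopf algebra structure (original vs.\ coopposite) and which character (original vs.\ inverse) is in play at each step, and in particular proving $\zeta_Q^{-1}\circ\Psi=\zeta^{-1}$ without invoking the desired equality $\Psi\circ S_B=S_Q\circ\Psi$ (which would make the universality argument circular). The group-homomorphism property of character pullback resolves this, letting the final step rest cleanly on the uniqueness in Theorem 4.1 of \cite{ABS}.
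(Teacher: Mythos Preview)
Your proof is correct, but the paper takes a more direct route for the commutativity step. Rather than appealing to the uniqueness clause of Theorem~4.1 of \cite{ABS}, the paper simply invokes the standard fact that any morphism of Hopf algebras commutes with the respective antipodes; hence $S_Q\circ\Psi=\Psi\circ S_B$ holds immediately at the level of Hopf algebras, before any characters are considered. With commutativity in hand from the outset, the paper then derives the character compatibility via the chain
\[
\zeta_Q^{-1}\circ\Psi=\zeta_Q\circ S_Q\circ\Psi=\zeta_Q\circ\Psi\circ S_B=\zeta\circ S_B=\zeta^{-1},
\]
which is exactly the computation you flagged as potentially circular in your own framework. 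It is not circular there because commutativity was obtained independently of the characters. Your alternative---establishing $\zeta_Q^{-1}\circ\Psi=\zeta^{-1}$ via the group-homomorphism property of character pullback, and only then invoking universality for commutativity---is a legitimate reordering, but it is slightly heavier: you bring in the terminal-object property of $(QSym,\zeta_Q)$ where a one-line Hopf-algebraic fact suffices.
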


\begin{proof}

The map $\Psi$ commutes with antipodes as a morphism of Hopf
algebras, so we need only prove that the above maps are morphisms
of combinatorial Hopf algebras. The Hopf algebras $QSym$ and
$BSet$ are commutative. By Lemma $\ref{l1}$, we have that both
$S_Q:(QSym^{cop},\zeta_Q^{-1})\rightarrow(QSym,\zeta_Q)$ and
$S_B:(BSet^{cop},\zeta^{-1})\rightarrow(BSet,\zeta)$ are morphisms
of combinatorial Hopf algebras. The map
$\Psi:BSet^{cop}\rightarrow QSym^{cop}$ is a morphism of Hopf
algebras. It follows from
$$\zeta_Q^{-1}\circ\Psi=\zeta_Q\circ
S_Q\circ\Psi=\zeta_Q\circ\Psi\circ S_B=\zeta\circ
S_B=\zeta^{-1},$$ that $\Psi:(BSet^{cop},
\zeta^{-1})\rightarrow(QSym^{cop},\zeta_Q^{-1})$ is a morphism of
combinatorial Hopf algebras as well.

\end{proof}

We obtain the following formula for the inverse of the character
$\zeta$ on the Hopf algebra of building sets.

\begin{proposition}\label{zeta}

The value $\zeta^{-1}(\mathcal{B}_X)$ is the $(-1)-$invariant of a
building set $\mathcal{B}_X\in BSet_n$

\[
\zeta^{-1}(\mathcal{B}_X)=\chi(\mathcal{B}_X,-1).
\]

\end{proposition}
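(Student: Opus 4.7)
The plan is to combine the explicit antipode formula (\ref{antipode}) with the expansion (\ref{invariant}) of the $(-1)$-invariant; this is the most direct route and avoids any detour through $QSym$. I start from the standard Hopf-algebra identity $\zeta^{-1} = \zeta \circ S_B$, valid in any graded connected Hopf algebra, and apply $\zeta$ termwise to formula (\ref{antipode}). Since $\zeta$ is multiplicative and equals $1$ on discrete building sets and $0$ on any non-discrete factor, an ordered tuple $(J_1, \ldots, J_k)$ with $J_1 \sqcup \cdots \sqcup J_k = X$ contributes $(-1)^{k}$ to $\zeta(S_B(\mathcal{B}_X))$ exactly when every restriction $(\mathcal{B}_X)|_{J_j}$ is discrete, and contributes $0$ otherwise.

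Next, I regroup the surviving tuples according to the composition $\alpha = (|J_1|, \ldots, |J_k|) \models n$. By the combinatorial description of $\zeta_\alpha(\mathcal{B}_X)$ given immediately after the definition of $\zeta$ in Section 4 (which is just the unpacking of (\ref{convolution}) for this particular character), the number of surviving ordered tuples whose induced composition is $\alpha$ is precisely $\zeta_\alpha(\mathcal{B}_X)$. Hence
\[
\zeta^{-1}(\mathcal{B}_X) \;=\; \sum_{\alpha \models n} (-1)^{k(\alpha)} \zeta_\alpha(\mathcal{B}_X),
\]
and the right-hand side is exactly $\chi(\mathcal{B}_X, -1)$ by equation (\ref{invariant}).

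A slightly more conceptual alternative uses Lemma \ref{l2}: the commuting square gives $\zeta^{-1} = \zeta_Q^{-1} \circ \Psi$, so $\zeta^{-1}(\mathcal{B}_X) = \zeta_Q^{-1}(\Psi(\mathcal{B}_X))$. Since the $m$-fold principal specialization coincides with the $m$-th convolution power of $\zeta_Q$ on $QSym$, its convolution inverse $\zeta_Q^{-1}$ is the specialization at $m = -1$, which by Definition \ref{cp} is $\chi(\mathcal{B}_X, -1)$. Either route reaches the statement in a few lines.

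I do not anticipate a serious obstacle; the proof is essentially a repackaging of two formulas already established. The only care needed in the direct route is bookkeeping: verifying that multiplicativity of $\zeta$ collapses the product $\prod_j \zeta((\mathcal{B}_X)|_{J_j})$ to an indicator of total discreteness, and matching the regrouping of ordered disjoint decompositions by part-size sequence with the definition of $\zeta_\alpha$ coming from (\ref{convolution}).
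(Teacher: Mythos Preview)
Your primary route is correct and genuinely different from the paper's proof. The paper goes through $QSym$: it invokes Lemma~\ref{l2} to write $\zeta^{-1} = \zeta_Q^{-1}\circ\Psi$, then applies the known value $\zeta_Q^{-1}(M_\alpha)=(-1)^{k(\alpha)}$ from \cite{ABS} to collapse $\Psi(\mathcal{B}_X)$ directly to the sum in~(\ref{invariant}). Your ``alternative'' is essentially this same argument. Your direct route, by contrast, bypasses $QSym$ entirely: applying $\zeta$ to the Takeuchi formula~(\ref{antipode}) and regrouping the surviving ordered decompositions by their size-composition is more self-contained, needing neither Lemma~\ref{l2} nor the external computation of $\zeta_Q^{-1}$. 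The paper's route, on the other hand, explains conceptually why $(-1)^{k(\alpha)}$ appears and is what motivates recording Lemma~\ref{l2} as a separate statement.
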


\begin{proof}

The inverse of the universal character $\zeta_Q$ on the Hopf
algebra of quasi-symmetric functions $QSym$, calculated in
$(\cite{ABS}, {\rm Example \ 4.8.})$, is given on the monomial
basis by

$$\zeta_Q^{-1}(M_\alpha)=(-1)^{k(\alpha)}.$$ Hence, by Lemma $\ref{l2}$, we have
$$\zeta^{-1}(\mathcal{B}_X)=\zeta_Q^{-1}\circ\Psi(\mathcal{B}_X)=\zeta_Q^{-1}(\sum_{\alpha\models n}\zeta_\alpha(\mathcal{B}_X)M_\alpha)=
\sum_{\alpha\models
n}(-1)^{k(\alpha)}\zeta_\alpha(\mathcal{B}_X).$$ This is
precisely, by $(\ref{invariant})$, the $(-1)-$invariant of a
building set $\mathcal{B}_X$.

\end{proof}

\begin{example}
Let $d_n=\zeta^{-1}(\mathcal{D}_n), n\in\mathbb{N}$ be values of
$\zeta^{-1}$ on discrete building sets $\mathcal{D}_n,
n\in\mathbb{N}$. Setting $\mathcal{D}_n, n\in\mathbb{N}$ in the
identity $\zeta^{-1}\ast\zeta=\epsilon$ gives the following
recursive relation $\sum_{i=0}^{n}d_i{n \choose i}=0,
n\in\mathbb{N},$ where $d_0=\zeta^{-1}(\mathcal{B}_\emptyset)=1,$
which is uniquely satisfied by $d_n=(-1)^{n},n\in\mathbb{N}$. The
same is obtained by calculating the chromatic polynomial of a
discrete building set $\chi(\mathcal{D}_n,m)=m^n$ at $m=-1$.
Denote by ${n\choose\alpha}={n\choose a_1 \ a_2\cdots
a_k}=\frac{n!}{a_1!a_2!\cdots a_k!}$ the multinomial coefficient
corresponding to the composition
$\alpha=(a_1,a_2,\ldots,a_k)\models n$. For a discrete building
set $\mathcal{D}_n$ and a composition $\alpha\models n,$ we have
$\zeta_\alpha(\mathcal{D}_n)={n\choose \alpha}.$ We obtain the
following identity

\begin{equation}\label{identity}
\sum_{\alpha\models n}(-1)^{k(\alpha)}{n\choose\alpha}=(-1)^{n}.
\end{equation}

\end{example}

By Theorem $\ref{del-contr}$, we have that the inverse character
$\zeta^{-1}$ satisfies the deletion-contraction property. Let
$\mathcal{C}$ be the generating collection of a building set
$\mathcal{B}=\mathcal{B}(\mathcal{C})$ and
$S\in\mathcal{C}_{\min}$ be a minimal element of collection
$\mathcal{C}$. Then

\[\zeta^{-1}(\mathcal{B})=\zeta^{-1}(\mathcal{B}\setminus
S)-\zeta^{-1}(\mathcal{B}/S).\]

\noindent The following lemma is an immediate consequence of
Proposition $\ref{free}$.

\begin{lemma}\label{oddfree}

Let $\mathcal{B}=\mathcal{B}(\mathcal{C})$ be a building set such
that the minimal collection $\mathcal{C}_{\min}$ contains a free
set of odd cardinality. Then $\zeta^{-1}(\mathcal{B})=0$.

\end{lemma}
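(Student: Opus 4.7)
The plan is to combine Proposition \ref{zeta} with Proposition \ref{free} and simply check that the polynomial factor introduced by the free set vanishes at $m = -1$ when $|S|$ is odd.

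First I would translate the statement into the language of chromatic polynomials. By Proposition \ref{zeta}, $\zeta^{-1}(\mathcal{B}) = \chi(\mathcal{B}, -1)$, so it suffices to prove that $\chi(\mathcal{B}, -1) = 0$ whenever $\mathcal{C}_{\min}$ contains a free set $S$ of odd cardinality.

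Next I would invoke Proposition \ref{free} on the free set $S$. There are two cases depending on whether $S$ is disjoint from the rest of the minimalization or meets it in a single point. In case (i), $\chi(\mathcal{B}, m) = \chi(\check{\mathcal{B}} \setminus S, m)(m^{|S|} - m)$; evaluating at $m = -1$ gives the factor $(-1)^{|S|} - (-1) = (-1)^{|S|} + 1$, which vanishes precisely when $|S|$ is odd. In case (ii), $\chi(\mathcal{B}, m) = \chi(\check{\mathcal{B}} \setminus S, m)(m^{|S|-1} - 1)$; evaluating at $m = -1$ gives $(-1)^{|S|-1} - 1$, which vanishes precisely when $|S| - 1$ is even, i.e.\ when $|S|$ is odd. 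In either case the product is zero, so $\chi(\mathcal{B}, -1) = 0$ and the conclusion follows.

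There is essentially no obstacle here beyond bookkeeping: the lemma is immediate once one realizes that Proposition \ref{free} exhibits a factor $m^{|S|} - m$ or $m^{|S|-1} - 1$ in the chromatic polynomial, and that both of these factors have $m = -1$ as a root when $|S|$ is odd. The only thing worth flagging is the use of Proposition \ref{zeta} to convert the algebraic statement about $\zeta^{-1}$ into the combinatorial statement about $\chi(\mathcal{B}, -1)$.
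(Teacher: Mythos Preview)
Your proposal is correct and follows exactly the route the paper intends: the lemma is stated there as an immediate consequence of Proposition~\ref{free}, and your argument spells out precisely that consequence, using Proposition~\ref{zeta} to identify $\zeta^{-1}(\mathcal{B})$ with $\chi(\mathcal{B},-1)$ and then checking that the factor $m^{|S|}-m$ or $m^{|S|-1}-1$ from Proposition~\ref{free} vanishes at $m=-1$ when $|S|$ is odd.
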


\noindent The conjugate character $\overline{\zeta}$ on $BSet$ is
given by

\begin{equation}\label{conjugate}
\overline{\zeta}(\mathcal{B}_X)=\left\{\begin{array}{cc}(-1)^{n},
& \mathcal{B}_X \ \mbox{is discrete of rank} \ n \\ 0, &
\mbox{otherwise}\end{array}\right..
\end{equation}

\noindent According to the generalized Dehn-Sommerville relations
$(\ref{gdsr})$ for the combinatorial Hopf algebra of building sets
$(BSet,\zeta)$, we have

$$\mathcal{B}_X\in S_-(BSet,\zeta) \ {\rm if \ and \ only \ if} \ (id\otimes(\zeta^{-1}-\overline{\zeta})\otimes
id)\circ\Delta^{(2)}(\mathcal{B}_X)=0.$$It follows from

$$(id\otimes(\zeta^{-1}-\overline{\zeta})\otimes
id)\circ\Delta^{(2)}(\mathcal{B}_X)=\sum_{I\sqcup J\sqcup
K=X}(\mathcal{B}_X)|_I\otimes(\zeta^{-1}-\overline{\zeta})((\mathcal{B}_X)|_J)\otimes(\mathcal{B}_X)|_K=$$
$\sum_{J\subset
X}(\zeta^{-1}-\overline{\zeta})((\mathcal{B}_X)|_J)\Delta((\mathcal{B}_X)|_{J^{c}})$,
that

\begin{equation}\label{condition}
\mathcal{B}_X\in S_-(BSet,\zeta) \ {\rm if} \
(\zeta^{-1}-\overline{\zeta})((\mathcal{B}_X)|_J)=0 \ {\rm for \
all} \ J\subset X.
\end{equation}

\begin{definition}

A building set $\mathcal{B}_X$ is an {\it eulerian building set}
if for all subsets $J\subset X$, either $(\mathcal{B}_X)|_J$ is
discrete or $\zeta^{-1}((\mathcal{B}_X)|_J)=0$.

\end{definition}

\noindent By Proposition $\ref{zeta}$ and formula
$(\ref{invariant1})$, the property of being eulerian for the
building set $\mathcal{B}=\mathcal{B}(\mathcal{C})$ depends only
on the collection of minimal elements $\mathcal{C}_{\min}$, i.e
$\mathcal{B}$ is eulerian if and only if its minimalization
$\check{\mathcal{B}}=\mathcal{B}(\mathcal{C}_{\min})$ is eulerian.
From $(\ref{conjugate})$ and $(\ref{condition})$ we obtain the
following property.

\begin{proposition}\label{eulerian}

If $\mathcal{B}_X$ is an eulerian building set then
$\mathcal{B}_X\in S_-(BSet)$.

\end{proposition}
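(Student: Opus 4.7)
The plan is to invoke the sufficient condition (\ref{condition}) and verify the equality $\zeta^{-1}((\mathcal{B}_X)|_J) = \overline{\zeta}((\mathcal{B}_X)|_J)$ for every subset $J \subset X$ by splitting on whether the restriction is discrete.

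First, I would recall that by (\ref{condition}), it suffices to establish that $(\zeta^{-1} - \overline{\zeta})((\mathcal{B}_X)|_J) = 0$ for every $J \subset X$. Fix such a $J$ and consider two cases based on the dichotomy provided by the eulerian hypothesis.

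In the first case, assume $(\mathcal{B}_X)|_J$ is discrete, so $(\mathcal{B}_X)|_J = \mathcal{D}_{|J|}$. By the Example computing $d_n = \zeta^{-1}(\mathcal{D}_n)$ (either via the recursion coming from $\zeta^{-1}\ast\zeta = \epsilon$, or via the direct identity $\chi(\mathcal{D}_n,-1) = (-1)^n$ from Proposition \ref{zeta}), we have $\zeta^{-1}((\mathcal{B}_X)|_J) = (-1)^{|J|}$. On the other hand, by the defining formula (\ref{conjugate}), $\overline{\zeta}((\mathcal{B}_X)|_J) = (-1)^{|J|}$ as well, so the difference vanishes.

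In the second case, assume $(\mathcal{B}_X)|_J$ is not discrete. Then the eulerian hypothesis gives $\zeta^{-1}((\mathcal{B}_X)|_J) = 0$, while (\ref{conjugate}) forces $\overline{\zeta}((\mathcal{B}_X)|_J) = 0$ since $\overline{\zeta}$ vanishes on non-discrete building sets. Again the difference vanishes. Since $J$ was arbitrary, condition (\ref{condition}) is satisfied, and hence $\mathcal{B}_X \in S_-(BSet,\zeta)$.

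The argument requires no obstacle beyond correctly matching the two cases of the eulerian definition with the two cases in the definition of $\overline{\zeta}$; the key observation — which makes the proof essentially immediate — is the coincidence $\zeta^{-1}(\mathcal{D}_n) = (-1)^n = \overline{\zeta}(\mathcal{D}_n)$ on discrete building sets, established in the Example via the computation of the chromatic polynomial of $\mathcal{D}_n$ at $m=-1$.
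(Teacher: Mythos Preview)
Your proof is correct and is exactly the argument the paper has in mind: the paper simply says the proposition follows from $(\ref{conjugate})$ and $(\ref{condition})$, and your case split on whether $(\mathcal{B}_X)|_J$ is discrete, together with the computation $\zeta^{-1}(\mathcal{D}_n)=(-1)^n$ from the preceding Example, is precisely what is needed to unpack that one-line justification.
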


It follows from the definition that for any eulerian building set
$\mathcal{B}_X$ and a subset $J\subset X$, the restriction
$(\mathcal{B}_X)|_J$ is an eulerian building set as well. Let
$\mathcal{E}Set$ be the subspace of $BSet$ spanned by all eulerian
building sets. Restrictions of an eulerian building set and
disjoint unions of eulerian building sets are again eulerian, so
the subspace $\mathcal{E}Set$ is a Hopf subalgebra of the Hopf
algebra of building sets $BSet$.

\begin{example}\label{discrete}

Let $\overline{\mathcal{D}}_n=\{\{1\},\{2\},\ldots,\{n\},[n]\}$.
From $(\ref{identity})$ we obtain
$\zeta^{-1}(\overline{\mathcal{D}}_n)=(-1)^n+1$. The same is a
simple consequence of the deletion-contraction property. Thus,
$\overline{\mathcal{D}}_n$ is eulerian if and only if $n$ is odd.
Let $\mathcal{C}$ be the generating collection of an eulerian
building set $\mathcal{B}=\mathcal{B}(\mathcal{C})$ and
$S\in\mathcal{C}_{\min}$ be a minimal element of the collection
$\mathcal{C}$. The restriction
$\mathcal{B}|_S=\overline{\mathcal{D}}_{|S|}$ is eulerian, so $S$
has an odd number of elements. Specially, nondiscrete graphical
building sets are not eulerian. It means that there is no
analogues notion of eulerian graphs in the combinatorial Hopf
algebra $(\mathcal{G},\zeta_{\mathcal{G}})$.

\end{example}

\begin{theorem}\label{bb}

Let $\mathcal{B}_X$ be an eulerian building set. Then
\begin{equation}\label{bbforbs}
\sum_{j=0}^{a_i}(-1)^{j}\zeta_{(a_1,\ldots,a_{i-1},j,a_i-j,a_{i+1},\ldots,a_k)}(\mathcal{B}_X)=0,
\end{equation}
where $\alpha=(a_1,\ldots,a_k)\models |X|$ and
$i\in\{1,\ldots,k(\alpha)\}$.

\end{theorem}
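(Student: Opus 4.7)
The plan is to transfer the Bayer--Billera relations from $QSym$ back to $BSet$ via the canonical morphism $\Psi$, using the fact that eulerian building sets live in the odd subalgebra.

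First, by Proposition~\ref{eulerian}, since $\mathcal{B}_X$ is eulerian, it lies in the odd subalgebra $S_-(BSet,\zeta)$. Next I would invoke the functoriality of the odd subalgebra construction recalled in Section~2: for any morphism of combinatorial Hopf algebras $\phi\colon(\mathcal{H}_1,\zeta_1)\to(\mathcal{H}_2,\zeta_2)$, one has $\phi(S_-(\mathcal{H}_1,\zeta_1))\subset S_-(\mathcal{H}_2,\zeta_2)$. Applying this to the canonical morphism $\Psi\colon(BSet,\zeta)\to(QSym,\zeta_Q)$ yields $\Psi(\mathcal{B}_X)\in S_-(QSym,\zeta_Q)$.

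Then I would use the explicit description of the generalized Dehn--Sommerville relations for $QSym$ recalled in Section~6: a homogeneous element $h=\sum_{\alpha\models n}f_\alpha(h)M_\alpha$ of $QSym$ belongs to $S_-(QSym,\zeta_Q)$ if and only if its coefficients $f_\alpha(h)$ satisfy the Bayer--Billera relations~(\ref{bbr}). Combined with the previous step, this gives the relations for the coefficients of $\Psi(\mathcal{B}_X)$.

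The final step is simply to identify the coefficients. By Definition~\ref{csf}, or more precisely by the defining formula~(\ref{canonical}) for the canonical morphism, the coefficient $f_\alpha(\Psi(\mathcal{B}_X))$ of $M_\alpha$ in $\Psi(\mathcal{B}_X)$ equals $\zeta_\alpha(\mathcal{B}_X)$. Substituting this into~(\ref{bbr}) gives exactly~(\ref{bbforbs}). There is no real obstacle; the theorem is essentially a formal consequence of Proposition~\ref{eulerian}, the naturality of $S_-$, and the known form of the Dehn--Sommerville relations in $QSym$. The substantive content was already established in identifying eulerian building sets with elements of $S_-(BSet,\zeta)$.
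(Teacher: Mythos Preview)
Your proposal is correct and follows essentially the same route as the paper: invoke Proposition~\ref{eulerian} to place $\mathcal{B}_X$ in $S_-(BSet,\zeta)$, use the inclusion $\Psi(S_-(BSet,\zeta))\subset S_-(QSym,\zeta_Q)$, and then read off the Bayer--Billera relations~(\ref{bbr}) for the coefficients $\zeta_\alpha(\mathcal{B}_X)$ of $\Psi(\mathcal{B}_X)$. You spell out the coefficient identification a bit more explicitly than the paper does, but the argument is identical.
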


\begin{proof}

Let $\Psi:(BSet,\zeta)\longrightarrow (QSym,\zeta_Q)$ be the
canonical morphism of the Hopf algebra $BSet$. Since
$$\mathcal{E}Set\subset S_-(BSet,\zeta)$$ and
$$\Psi(S_-(BSet,\zeta))\subset S_-(QSym,\zeta_Q),$$ relations
$(\ref{bbforbs})$ follow from the Bayer-Billera relations
$(\ref{bbr}).$

\end{proof}

\section{The cd-index of eulerian building sets}

The $\mathbf{c}\mathbf{d}-$ index $\Phi_P(\mathbf{c},\mathbf{d})$
of an eulerian poset $P$ is a polynomial in noncommutative
variables, which efficiently encodes the flag $f-$vector. Its
existence is equivalent to the Bayer-Billera relations
($\cite{BK}$, Theorem 4.). There is a general result for the
existence of the $\mathbf{c}\mathbf{d}-$index which is known from
$\cite{A}$. The algebra of noncommutative polynomials
$\mathbb{K}<\mathbf{a},\mathbf{b}>$ with the comultiplication
defined by $\Delta(\mathbf{a})=\Delta(\mathbf{b})=1\otimes 1$ is
the terminal object in the category of infinitesimal Hopf
algebras. The canonical morphism sends the eulerian subalgebra of
an infinitesimal Hopf algebra to the subalgebra of polynomials
generated by variables $\mathbf{c}=\mathbf{a}+\mathbf{b}$ and
$\mathbf{d}=\mathbf{a}\mathbf{b}+\mathbf{b}\mathbf{a}$. In this
section we construct the $\mathbf{c}\mathbf{d}-$index of eulerian
building sets directly, by analogy with Stanley's proof of the
existence of the $\mathbf{c}\mathbf{d}-$index of eulerian posets
${\rm (\cite{Scd}, \ Theorem \ 1.1)}$.

To a composition $\alpha=(a_1,a_2,\ldots,a_k)\models n$ is
associated the set
$S(\alpha)=\{a_1,a_1+a_2,\ldots,a_1+\cdots+a_{k-1}\}\subset[n-1]$.
The compositions are ordered by $\beta\preceq\alpha$ if and only
if $S(\beta)\subset S(\alpha)$. Let $u_\alpha$ be a monomial in
two noncommutative variables $\mathbf{a}$ and $\mathbf{b}$ defined
by

\[u_\alpha=\prod_{i=1,n}u_{\alpha,i}, \ \mbox{where} \ u_{\alpha,i}=\left\{\begin{array}{cc}\mathbf{a}, &
i\notin S(\alpha)\\ \mathbf{b}, & i\in S(\alpha)
\end{array}\right..\] Given a building set $\mathcal{B}$ of the rank $n$. Define the flag $f-$vector of
$\mathcal{B}$ to be $(\zeta_\alpha(\mathcal{B}))_{\alpha\models
n}$ and the flag $h-$vector
$(\eta_\alpha(\mathcal{B}))_{\alpha\models n}$ to be a regular
linear transformation

\begin{equation}\label{h}
\eta_\alpha(\mathcal{B})=\sum_{\beta\preceq\alpha}(-1)^{k(\alpha)-k(\beta)}\zeta_\beta(\mathcal{B}).
\end{equation}
The $\mathbf{a}\mathbf{b}-$index of $\mathcal{B}$ is the
generating function of the flag $h-$vector

\[\Psi_{\mathcal{B}}(\mathbf{a},\mathbf{b})=\sum_{\alpha\models
n}\eta_\alpha(\mathcal{B})u_\alpha.\]

\begin{theorem}\label{cdindex}
If $\mathcal{B}\in\mathcal{E}Set$ is an eulerian building set then
there is a polynomial $\Phi_\mathcal{B}(\mathbf{c},\mathbf{d})$ in
variables $\mathbf{c}=\mathbf{a}+\mathbf{b}$ and
$\mathbf{d}=\mathbf{a}\mathbf{b}+\mathbf{b}\mathbf{a}$, called the
$\mathbf{c}\mathbf{d}-$index of $\mathcal{B}$, such that
\[\Psi_\mathcal{B}(\mathbf{a},\mathbf{b})=\Phi_\mathcal{B}(\mathbf{c},\mathbf{d}).\]
\end{theorem}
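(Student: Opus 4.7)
The plan is to follow Stanley's argument for eulerian posets (\cite{Scd}, Theorem 1.1), adapted to building sets. The whole point of Theorem \ref{bb} is that it gives us the Bayer-Billera relations (\ref{bbforbs}) for the flag $f$-vector $(\zeta_\alpha(\mathcal{B}))_{\alpha\models n}$ of an eulerian $\mathcal{B}$; the remainder of the argument is purely algebraic and depends only on the formal definitions (\ref{h}) of the flag $h$-vector and of $u_\alpha$.

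First I would translate the Bayer-Billera relations (\ref{bbforbs}) on $\zeta_\alpha$ into equivalent relations on $\eta_\alpha$. The defining transform (\ref{h}) is a Möbius inversion on the poset of compositions of $n$ ordered by refinement, so it is invertible; one checks by direct substitution that a relation of the shape $\sum_{j=0}^{a_i}(-1)^j\zeta_{\alpha(j)} = 0$ becomes a specific linear combination of $\eta_\alpha$'s. A convenient way to package this is to encode a composition $\alpha\models n$ by the set $S(\alpha)\subset [n-1]$ and rewrite $\Psi_\mathcal{B}(\mathbf{a},\mathbf{b}) = \sum_{S\subset[n-1]} \eta_S u_S$, where $u_S = u_\alpha$ for $S = S(\alpha)$.

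Next I would show that these $h$-vector relations are exactly the ones characterising polynomials expressible in $\mathbf{c} = \mathbf{a}+\mathbf{b}$ and $\mathbf{d} = \mathbf{ab}+\mathbf{ba}$. Using the identities $\mathbf{a}^2+\mathbf{b}^2 = \mathbf{c}^2 - \mathbf{d}$ and $\mathbf{ab}+\mathbf{ba} = \mathbf{d}$, one pairs each $u_\alpha$ containing an $\mathbf{aa}$ at consecutive positions with its sibling having $\mathbf{bb}$ at those positions; the Bayer-Billera relations guarantee that these pairs occur with equal coefficients, so the partial sums of $\Psi_\mathcal{B}$ corresponding to each "shape" can be collected into monomials in $\mathbf{c}$ and $\mathbf{d}$. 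Proceeding by induction on $n$, stripping off the leading $\mathbf{c}$ or $\mathbf{d}$ factor at each step, produces the desired polynomial $\Phi_\mathcal{B}(\mathbf{c},\mathbf{d})$ with $\Psi_\mathcal{B}(\mathbf{a},\mathbf{b}) = \Phi_\mathcal{B}(\mathbf{c},\mathbf{d})$. This equivalence between Bayer-Billera for the flag $f$-vector and expressibility as a $\mathbf{c}\mathbf{d}$-polynomial is precisely the content of \cite{BK}, Theorem~4.

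The main obstacle is the combinatorial bookkeeping in the second step: writing out explicitly which pairs of monomials get combined into a $\mathbf{c}$ and which into a $\mathbf{d}$, and matching this up with the linear combinations forced by (\ref{bbforbs}) after applying the transform (\ref{h}). Fortunately this bookkeeping is identical to the poset case, since it refers only to the formal structure of compositions and the transforms (\ref{h}) and (\ref{bbr}), not to any property of posets or building sets; once Theorem \ref{bb} is established, the argument of \cite{Scd} carries over verbatim.
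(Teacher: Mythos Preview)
Your approach is correct but takes a genuinely different route from the paper's. You invoke Theorem~\ref{bb} to obtain the Bayer--Billera relations (\ref{bbforbs}) for $(\zeta_\alpha(\mathcal{B}))$ and then appeal to the purely formal equivalence of Bayer--Klapper (\cite{BK}, Theorem~4) between those relations and expressibility of the $\mathbf{ab}$-index as a $\mathbf{cd}$-polynomial; as you note, that equivalence depends only on the transform (\ref{h}) and the shape of (\ref{bbr}), so it transfers verbatim. The paper instead adapts Stanley's incidence-algebra argument directly to $BSet$: it introduces functionals $f(\mathcal{B})=\mathbf{a}\Psi_\mathcal{B}$, $g(\mathcal{B})=\mathbf{b}\Psi_\mathcal{B}$, and $h(\mathcal{B})=(\mathbf{a}-\mathbf{b})^n$ on discrete $\mathcal{B}$, reads off $f=h\ast g$ from the definition of $\Psi_\mathcal{B}$, computes $h^{-1}(\mathcal{B})=\zeta^{-1}(\mathcal{B})(\mathbf{a}-\mathbf{b})^{|X|}$ via the antipode, uses the eulerian hypothesis on restrictions to simplify $g=h^{-1}\ast f$, and sums the two identities to obtain a recursion for $\Psi_\mathcal{B}$ with coefficients in $\mathbf{c}$ and $\mathbf{c}^2-2\mathbf{d}$. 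That proof never invokes Theorem~\ref{bb}, and it yields the explicit recursive formula (\ref{recursive}) and the closed expression (\ref{omega}), which the paper then exploits for computing examples. Your route is shorter and more modular but produces no such formula; note also that what you describe is really the Bayer--Klapper argument, not Stanley's --- Theorem~1.1 of \cite{Scd} is precisely the incidence-algebra proof the paper follows.
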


\begin{proof}

Let $\mathcal{B}$ be an eulerian building set of the rank $n$ on
the ground set $X$. It is an immediate consequence of $(\ref{h})$
that

\[\Psi_{\mathcal{B}}(\mathbf{a}+\mathbf{b},\mathbf{b})=\sum_{\alpha\models
n}\zeta_\alpha(\mathcal{B})u_\alpha.\] Therefore,

\begin{equation}\label{eq1}
\Psi_{\mathcal{B}}(\mathbf{a},\mathbf{b})=\sum_{k\geq1}\sum_{S_1\sqcup\ldots\sqcup
S_k=X}(\mathbf{a}-\mathbf{b})^{|S_1|-1}\mathbf{b}(\mathbf{a}-\mathbf{b})^{|S_2|-1}\mathbf{b}\cdots(\mathbf{a}-\mathbf{b})^{|S_k|-1},
\end{equation}
where the inner sum is over all ordered decompositions
$X=S_1\sqcup\ldots\sqcup S_k$, such that the restrictions
$\mathcal{B}|_{S_i}, \ i=1,\ldots,k$ are discrete. Let $I(BSet)$
be the incidence algebra of $BSet$ over the ring of noncommutative
polynomials $\mathbb{Q}\langle\mathbf{a},\mathbf{b}\rangle$.
Define functionals $f,g,h\in I(BSet)$ by
\[f(\mathcal{B})=\mathbf{a}\Psi_{\mathcal{B}}(\mathbf{a},\mathbf{b}), \ g(\mathcal{B})=\mathbf{b}\Psi_{\mathcal{B}}(\mathbf{a},\mathbf{b}),\]
\[h(\mathcal{B})=\left\{\begin{array}{cc}(\mathbf{a}-\mathbf{b})^n,
& \mathcal{B} \ \mbox{is discrete of rank} \ n \\ 0, &
\mbox{otherwise}\end{array}\right..\] From the equation
$(\ref{eq1})$ follows

\begin{equation}\label{eq2}
\Psi_{\mathcal{B}}(\mathbf{a},\mathbf{b})=\sum_{\emptyset\neq
I\subset X:\mathcal{B}|_I \
\mbox{discrete}}(\mathbf{a}-\mathbf{b})^{|I|-1}\mathbf{b}\Psi_{\mathcal{B}|_{I^c}}(\mathbf{a},\mathbf{b}),
\end{equation}
which gives $f=h\ast g$. By the formula $(\ref{antipode})$ for the
antipode $S$ of the Hopf algebra $BSet$, we obtain the inverse of
$h$

\[h^{-1}(\mathcal{B})=h\circ S(\mathcal{B})=h(\sum_{k\geq1}(-1)^k\sum_{J_1\sqcup\ldots\sqcup
J_k=X}\prod_{i=1,k}\mathcal{B}|_{J_i})=\]
\[=(\mathbf{a}-\mathbf{b})^{|X|}\sum_{k\geq1}(-1)^k|\{J_1\sqcup\ldots\sqcup
J_k=X|\mathcal{B}|_{J_i} \ \mbox{discrete}, \
i=1,k\}|=\zeta^{-1}(\mathcal{B})(\mathbf{a}-\mathbf{b})^{|X|}.\]
Hence for eulerian building sets we have

\[h^{-1}(\mathcal{B})=\left\{\begin{array}{cc}(-1)^n(\mathbf{a}-\mathbf{b})^n,
& \mathcal{B} \ \mbox{is discrete of rank} \ n \\ 0, &
\mbox{otherwise}\end{array}\right., \
\mathcal{B}\in\mathcal{E}Set.\] By definition, the restrictions
$\mathcal{B}|_I, I\subset X$ of an eulerian building set
$\mathcal{B}$ satisfy either $\zeta^{-1}(\mathcal{B}|_I)=0$ or
$\mathcal{B}|_I$ is discrete. Therefore the identity $g=h^{-1}\ast
f$ gives the following

\begin{equation}\label{eq3}
\Psi_{\mathcal{B}}(\mathbf{a},\mathbf{b})=\sum_{\emptyset\neq
I\subset X:\mathcal{B}|_I \
\mbox{discrete}}(-1)^{|I|-1}(\mathbf{a}-\mathbf{b})^{|I|-1}\mathbf{a}\Psi_{\mathcal{B}|_{I^c}}(\mathbf{a},\mathbf{b}).
\end{equation}
Summing up the equations $(\ref{eq2})$ and $(\ref{eq3})$ gives the
following recursive formula

\begin{equation}\label{recursive}
\Psi_\mathcal{B}(\mathbf{a},\mathbf{b})=\frac{1}{2}\sum_{|I| \
\mbox{odd}}(\mathbf{c}^2-2\mathbf{d})^{\frac{|I|-1}{2}}\mathbf{c}\Psi_{\mathcal{B}|_{I^c}}(\mathbf{a},\mathbf{b})
-\frac{1}{2}\sum_{|I| \
\mbox{even}}(\mathbf{c}^2-2\mathbf{d})^{\frac{|I|}{2}}\Psi_{\mathcal{B}|_{I^c}}(\mathbf{a},\mathbf{b}),
\end{equation}
where the sums are over all nonempty subsets $I\subset X$ such
that $\mathcal{B}|_I$ is discrete. Since the restrictions of
eulerian building sets are eulerian, the statement of theorem
follows by induction of the rank of $\mathcal{B}$.

\end{proof}

\noindent Let $\bar{\alpha}\models n$ be the opposite composition
of a composition $\alpha\models n$, defined by
$S(\bar{\alpha})=[n-1]\setminus S(\alpha)$. It is an immediate
corollary of Theorem $\ref{cdindex}$ that the flag $h-$vector of
an eulerian building set is symmetric

\[\eta_\alpha(\mathcal{B})=\eta_{\bar{\alpha}}(\mathcal{B}), \ \mbox{for all} \ \alpha\models n \ \mbox{and} \ \mathcal{B}\in\mathcal{E}Set_n.\]
This is equivalent to
$\Psi_\mathcal{B}(\mathbf{a},\mathbf{b})=\Psi_\mathcal{B}(\mathbf{b},\mathbf{a}),$
which is a consequence of
$\Psi_\mathcal{B}(\mathbf{a},\mathbf{b})=\Phi_\mathcal{B}(\mathbf{a}+\mathbf{b},\mathbf{a}\mathbf{b}+\mathbf{b}\mathbf{a}).$

\noindent For $j\geq1$ define $\omega(j)=\left\{\begin{array}{cc}
\frac{1}{2}(\mathbf{c}^2-2\mathbf{d})^{\frac{j-1}{2}}\mathbf{c}, &
j \
\mbox{odd} \\
-\frac{1}{2}(\mathbf{c}^2-2\mathbf{d})^{\frac{j}{2}}, & j \
\mbox{even}\end{array}\right.$ and $\delta_j=
\left\{\begin{array}{cc} 0, & j \ \mbox{even} \\
(c^2-2d)^{\frac{j-1}{2}}, & j \ \mbox{odd}\end{array}\right..$
\noindent By iterating the recursive formula $(\ref{recursive})$
we obtain more explicitly

\begin{equation}\label{omega}
\Phi_\mathcal{B}(\mathbf{c},\mathbf{d})=\sum_{k\geq1}\sum_{\alpha=(a_1,\ldots,a_k)\models
n}
\zeta_\alpha(\mathcal{B})\omega(a_1)\cdots\omega(a_{k-1})\delta_{a_k},
\ \mathcal{B}\in\mathcal{E}Set_n.
\end{equation}

\begin{example}

Let $\Phi_n=\Phi_{\mathcal{D}_n}(\mathbf{c},\mathbf{d})$ be the
$\mathbf{c}\mathbf{d}-$index of the discrete building set
$\mathcal{D}_n$. From the recursive formula $(\ref{recursive})$ we
obtain

\begin{equation}\label{andre}
\Phi_n=\frac{1}{2}\sum_{k \ \mbox{odd}}{n \choose
k}(\mathbf{c}^2-2\mathbf{d})^{\frac{k-1}{2}}\mathbf{c}\Phi_{n-k}-\frac{1}{2}\sum_{k
\ \mbox{even}}{n \choose
k}(\mathbf{c}^2-2\mathbf{d})^{\frac{k}{2}}\Phi_{n-k}+\delta_n.
\end{equation}
For instance,
\[\Phi_2=\mathbf{c}, \ \Phi_3=\mathbf{c}^2+\mathbf{d}, \
\Phi_4=\mathbf{c}^3+2(\mathbf{c}\mathbf{d}+\mathbf{d}\mathbf{c}),
\
\Phi_5=\mathbf{c}^4+3(\mathbf{c}^2\mathbf{d}+\mathbf{d}\mathbf{c}^2)+5\mathbf{c}\mathbf{d}\mathbf{c}+4\mathbf{d}^2.\]
The following numerical identity is obtained from
$(\ref{recursive})$ and $(\ref{omega})$ by calculating the
coefficient by $\mathbf{c}^{n-1}$ in $\Phi_n$

\[[\mathbf{c}^{n-1}]\Phi_n=1=\sum_{\begin{array}{c} \alpha\models n, \\
a_{k(\alpha)} \ \mbox{odd}\end{array}}{n \choose
\alpha}\frac{(-1)^{e(\alpha)}}{2^{k(\alpha)-1}}.\] By compering
$(\ref{andre})$ with the similar recurrence relation satisfied by
the $\mathbf{c}\mathbf{d}-$index $U_{B_n}(\mathbf{c},\mathbf{d})$
of boolean posets $B_n$ ${\rm (\cite{Scd}, \ Corollary \ 1.3)},$
we obtain

\[\Phi_{\mathcal{D}_n}(\mathbf{c},\mathbf{d})=U_{B_n}(\mathbf{c},\mathbf{d}),\]which
is recognized as the certain combinatorially defined polynomial,
called Andre polynomial. The generating function of $\Phi_n$ ${\rm
(\cite{Scd}, \ Corollary \ 1.4)}$ is given by

\[\sum_{n\geq1}\Phi_n\frac{x^n}{n!}=\frac{2\sinh(\mathbf{a}-\mathbf{b})x}{\mathbf{a}-\mathbf{b}}
\left(1-\frac{\mathbf{c}\sinh(\mathbf{a}-\mathbf{b})x}{\mathbf{a}-\mathbf{b}}+\cosh(\mathbf{a}-\mathbf{b})x\right)^{-1}.\]

\end{example}

\section{The geometric characterization of eulerian building sets}

In this section we give the complete characterization of the class
of eulerian building sets by some geometric conditions. To an
antichain $\mathcal{L}$ of subsets of the ground set $X$ is
associated its {\it nerve} $\Delta(\mathcal{L})$, which is an
abstract simplicial complex on the vertex set $\mathcal{L}$,
defined by
\[\Delta(\mathcal{L})=\{\mathcal{S}\subset
\mathcal{L}|\cap\mathcal{S}\neq\emptyset\}.\] Let
$|\Delta(\mathcal{L})|$ be the geometric realization of the nerve
$\Delta(\mathcal{L})$ and
link$_{\Delta(\mathcal{L})}(\{S\})=\{\mathcal{S}\in\Delta(\mathcal{L})|S\notin\mathcal{S},
(\cap\mathcal{S})\cap S\neq\emptyset\}$ and
star$_{\Delta(\mathcal{L})}(\{S\})=\{\mathcal{S}\in\Delta(\mathcal{L})|
S\in\mathcal{S}\}$ be the link and the star of a vertex $\{S\}$ in
the complex $\Delta(\mathcal{L})$.

\begin{definition}

Let $e_{\mathcal{L}}:\Delta(\mathcal{L})\rightarrow\mathbb{N}$ be
an assignment $e_{\mathcal{L}}(\mathcal{S})=|\cap\mathcal{S}|$ of
the number of elements in the intersection $\cap\mathcal{S}$ to a
simplex $\mathcal{S}\in\Delta(\mathcal{L})$. The collection
$\mathcal{L}$ is {\it odd} if $e_{\mathcal{L}}(\mathcal{S})$ is
odd for all $\mathcal{S}\in\Delta(\mathcal{L})$. A simplex
$\mathcal{S}\in\Delta(\mathcal{L})$ is {\it close} to a vertex
$S\in\mathcal{L}$ if $S^{'}\cap S\neq\emptyset$ for all
$S^{'}\in\mathcal{S}$. A simplex $\mathcal{S}$ is {\it far} from a
vertex $S$ if it is not close to $S$.

\end{definition}

\noindent The following lemma is clear from definitions.

\begin{lemma}\label{technical}

Given an element $S\in\mathcal{L}$ of an antichain $\mathcal{L}$,
then
\[\Delta(\mathcal{L}\setminus\{S\})=\Delta(\mathcal{L})\setminus
{\rm star}_{\Delta(\mathcal{L})}(\{S\}),\]

\[e_{\mathcal{L}\setminus\{S\}}(\mathcal{S})=e_{\mathcal{L}}(\mathcal{S}),
\ \mbox{for all} \
\mathcal{S}\in{\Delta(\mathcal{L}\setminus\{S\})},\]

\[\Delta(\mathcal{L}/S)=\Delta(\mathcal{L}\setminus\{S\})\cup\mathcal{P}(\{S^{'}\in\mathcal{L}|S^{'}\cap
S\neq\emptyset\}),\]

\[e_{\mathcal{L}/S}(\mathcal{S}/S)=\left\{\begin{array}{cc}
e_{\mathcal{L}}(\mathcal{S}), &  {\rm if} \ \mathcal{S} \ {\rm is
\ far \ from} \ S \\
e_{\mathcal{L}}(\mathcal{S})-e_{\mathcal{L}}(\mathcal{S}\cup\{S\})+1,
& {\rm if} \ \mathcal{S} \ {\rm is \ close \ to} \ S
\end{array}\right..\]

\end{lemma}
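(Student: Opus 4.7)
The plan is to verify each of the four assertions directly from the definitions; the only nontrivial bookkeeping is a case analysis for the contraction $\mathcal{L}/S$ according to whether elements of $\mathcal{L}$ are close to or far from $S$.

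For assertion (1), I would simply unpack both sides: a simplex $\mathcal{T} \in \Delta(\mathcal{L}\setminus\{S\})$ is by definition a subset $\mathcal{T} \subset \mathcal{L}\setminus\{S\}$ with $\cap \mathcal{T} \neq \emptyset$, whereas $\Delta(\mathcal{L})\setminus {\rm star}_{\Delta(\mathcal{L})}(\{S\})$ consists of simplices of $\Delta(\mathcal{L})$ that do not contain $S$ as a vertex, which describes the same family. Assertion (2) is immediate since $e_{\mathcal{L}}(\mathcal{S}) = |\cap\mathcal{S}|$ depends only on $\mathcal{S}$ itself, not on the ambient antichain.

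For assertion (3), the key observation is the anatomy of $A/S$ for $A \in \mathcal{L}$: the new point $S$ of $X/S$ belongs to $A/S$ if and only if $A \cap S \neq \emptyset$ (i.e.\ $A$ is close to $S$), while a point $x \in X \setminus S$ belongs to $A/S$ if and only if $x \in A$. Identifying a subset of $\mathcal{L}/S$ with a subset $\mathcal{T} \subset \mathcal{L}$ via $A \mapsto A/S$, the intersection $\bigcap_{A \in \mathcal{T}} A/S$ is nonempty exactly when either (i) every $A \in \mathcal{T}$ is close to $S$, in which case $S \in \bigcap_{A \in \mathcal{T}} A/S$ and $\mathcal{T} \in \mathcal{P}(\{S' \in \mathcal{L} : S' \cap S \neq \emptyset\})$, or (ii) some point of $X \setminus S$ lies in every $A/S$, in which case $\cap \mathcal{T} \neq \emptyset$ and $\mathcal{T} \in \Delta(\mathcal{L}\setminus\{S\})$. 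Both inclusions follow from this dichotomy.

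For assertion (4), in the close case every $A/S$ with $A \in \mathcal{S}$ contains the new point $S$ and satisfies $A/S \setminus \{S\} = A \setminus S$, so $\bigcap_{A \in \mathcal{S}} A/S = \{S\} \cup ((\cap \mathcal{S}) \setminus S)$, of cardinality $1 + |\cap \mathcal{S}| - |(\cap \mathcal{S}) \cap S| = 1 + e_{\mathcal{L}}(\mathcal{S}) - e_{\mathcal{L}}(\mathcal{S} \cup \{S\})$, using that $\cap(\mathcal{S}\cup\{S\}) = (\cap\mathcal{S})\cap S$ and that $e_{\mathcal{L}}$ extends naturally by zero to non-simplices. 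In the far case some $A \in \mathcal{S}$ satisfies $A \cap S = \emptyset$, which forces $\cap \mathcal{S} \subset A \subset X \setminus S$, hence $\bigcap_{A \in \mathcal{S}} A/S = \cap \mathcal{S}$, of cardinality $e_{\mathcal{L}}(\mathcal{S})$. The principal obstacle, mild as it is, lies in the careful dichotomy needed for (3); the rest is essentially mechanical.
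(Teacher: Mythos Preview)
Your proposal is correct and is precisely the kind of verification the paper has in mind: the paper itself gives no proof, simply declaring the lemma ``clear from definitions,'' and your case analysis for the contraction (splitting according to whether every member of $\mathcal{T}$ meets $S$, and tracking the new point of $X/S$) is exactly how one unpacks that claim. Your remark that $e_{\mathcal{L}}$ is tacitly extended by zero to non-simplices is a useful clarification, since in the close case $\mathcal{S}\cup\{S\}$ need not be a simplex of $\Delta(\mathcal{L})$.
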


\begin{definition}
An antichain $\mathcal{L}=\{S_1,\ldots,S_k\}$ of subsets of $X$ is
a $k-${\it clique} if $\cap\mathcal{L}\neq\emptyset$, i.e. the
nerve $\Delta(\mathcal{L})$ is a simplex on the vertex set
$\mathcal{L}$.
\end{definition}

\begin{lemma}\label{clique}
Let $\mathcal{L}=\{S_1,\ldots,S_k\}$ be a $k-$clique on $X$. The
building set $\mathcal{B}(\mathcal{L})$ is eulerian if and only if
$\mathcal{L}$ is an odd collection.

\end{lemma}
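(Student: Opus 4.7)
My plan is to prove both directions by induction on $k=|\mathcal{L}|$, after first reducing to the case $X=\cup\mathcal{L}$ (isolated points of $X$ outside $\cup\mathcal{L}$ contribute only a discrete factor that affects neither condition). The base case $k=1$ is immediate from Example $\ref{discrete}$: $\mathcal{B}(\{S_1\})=\overline{\mathcal{D}}_{|S_1|}$ is eulerian iff $|S_1|$ is odd, which is precisely oddness of the $1$-clique.

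For the forward direction, the key structural observation is that every restriction decomposes as $(\mathcal{B}(\mathcal{L}))|_J=\mathcal{B}(\mathcal{L}_J)\cdot\mathcal{D}_{J\setminus\cup\mathcal{L}_J}$ with $\mathcal{L}_J=\{S\in\mathcal{L}:S\subseteq J\}$, which is itself a sub-clique inheriting oddness. So it suffices to show $\zeta^{-1}(\mathcal{B}(\mathcal{L}'))=0$ for every nonempty odd clique $\mathcal{L}'$. Using formula $(\ref{invariant1})$, each nonempty $\mathcal{S}\subseteq\mathcal{L}'$ contributes $c(\mathcal{S})=1+|\cup\mathcal{L}'|-|\cup\mathcal{S}|$ because $\cap\mathcal{S}\ne\emptyset$ produces a single non-trivial component; oddness together with M\"obius inversion forces $|\cup\mathcal{S}|$ to be odd for every such $\mathcal{S}$, and the resulting expression collapses to a multiple of $\sum_{\mathcal{S}\subseteq\mathcal{L}'}(-1)^{|\mathcal{S}|}=0$.

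For the converse, suppose $\mathcal{B}(\mathcal{L})$ is eulerian. Since eulerianness passes to restrictions and $|\mathcal{L}_J|<k$ for every $J\subsetneq X$, the inductive hypothesis forces every subcollection of the form $\mathcal{L}_J$ (with $J\subsetneq X$) to be odd. If $\mathcal{L}$ itself were not odd with some bad $\mathcal{S}$ (i.e., $|\cap\mathcal{S}|$ even), its closure $\overline{\mathcal{S}}=\{T\in\mathcal{L}:T\subseteq\cup\mathcal{S}\}$ equals $\mathcal{L}_{\cup\mathcal{S}}$, contains $\mathcal{S}$, and so is also not odd; by induction this forces $\overline{\mathcal{S}}=\mathcal{L}$, meaning every bad subcollection of $\mathcal{L}$ covers $X$. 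Taking $J=X$ in the eulerian condition gives $\zeta^{-1}(\mathcal{B}(\mathcal{L}))=0$, and I would derive a contradiction by direct calculation.

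The decisive step is this final calculation, which I expect to be the main obstacle. Splitting the sum $(\ref{invariant1})$ according to whether $\cup\mathcal{S}=X$ or $\cup\mathcal{S}\subsetneq X$ — the latter being odd subcliques to which the forward parity argument applies — one obtains an expression of the form $\zeta^{-1}(\mathcal{B}(\mathcal{L}))=-((-1)^{|X|}+1)\sum_{\cup\mathcal{S}=X}(-1)^{|\mathcal{S}|}$. Applying inclusion-exclusion to $|X|=|\cup\mathcal{S}_0|$ for a minimal bad $\mathcal{S}_0$ (all whose proper sub-intersections are odd) forces $|X|$ to be even, so the first factor equals $-2$. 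The remaining nonvanishing of the alternating sum $\sum_{\cup\mathcal{S}=X}(-1)^{|\mathcal{S}|}$ is dual by inclusion-exclusion to an alternating count of transversals of $\mathcal{L}$, and I would establish it by stripping off a distinguished element $x_0\in\cap\mathcal{L}$ (available since $\mathcal{L}$ is a clique) and iterating the reduction until the parity constraint forced by the minimal bad $\mathcal{S}_0$ delivers a nonzero contribution.
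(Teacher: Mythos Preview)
Your forward direction is correct and cleaner than the paper's: where the paper uses deletion--contraction and induction (via Lemma~\ref{technical}) to get $\zeta^{-1}(\mathcal{B}(\mathcal{L}'))=0$ for every odd sub-clique $\mathcal{L}'$, your direct parity computation from $(\ref{invariant1})$ does it in one stroke (inclusion--exclusion makes $|\cup\mathcal{S}|$ odd for every nonempty $\mathcal{S}\subseteq\mathcal{L}'$, so every summand is $-(-1)^{|\mathcal{S}|}$ and the total vanishes). The decomposition $(\mathcal{B}(\mathcal{L}))|_J=\mathcal{B}(\mathcal{L}_J)\cdot\mathcal{D}$ of restrictions is the same in both arguments.

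The converse, however, has a genuine gap. Your reduction and the identity $\zeta^{-1}(\mathcal{B}(\mathcal{L}))=-2\sum_{\cup\mathcal{S}=X}(-1)^{|\mathcal{S}|}$ are correct, but the final nonvanishing is not proved, and the sketched ``strip off $x_0\in\cap\mathcal{L}$'' manoeuvre does not terminate in anything decisive: after removing $x_0$ you face the same alternating transversal sum for $\{S_i\setminus\{x_0\}\}$, with no mechanism forcing a nonzero value. Concretely, under your hypothesis ``every bad $\mathcal{S}$ covers $X$'' the sum \emph{can} vanish: with $S_1=\{1,\dots,5\}$, $S_2=\{4,\dots,8\}$, $S_3=\{2,3,4,6,7\}$ the unique bad set $\{S_1,S_2\}$ covers $X=[8]$, yet the covers of $X$ are exactly $\{S_1,S_2\}$ and $\{S_1,S_2,S_3\}$, giving $A=0$. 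The paper's route for the converse is different: it first isolates the \emph{almost odd} case ($|\cap\mathcal{S}|$ odd for all proper $\mathcal{S}$, $|\cap\mathcal{L}|$ even) and computes $\zeta^{-1}(\mathcal{B}(\mathcal{L}))=(-1)^{k-1}\cdot 2$ by deletion--contraction, since $\mathcal{L}\setminus\{S_k\}$ is odd while $\mathcal{L}/S_k$ is again almost odd of size $k-1$. In your language this is precisely the sub-case where $\mathcal{L}$ is the \emph{only} cover of $X$ (every proper nonempty $\mathcal{S}$ has $|\cup\mathcal{S}|$ odd $\neq |X|$), so $A=(-1)^k$. For cliques that are neither odd nor almost odd the paper then passes to a minimal almost-odd subcollection and argues via restriction; your approach would need some analogous reduction to the almost-odd situation rather than attacking $A\neq 0$ for the full $\mathcal{L}$ directly.
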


\begin{proof}

We prove the statement by induction on $k$. For $k=1$, we have
$\mathcal{B}(\{S\})=\overline{\mathcal{D}}_{|S|}$. Hence, by
example $\ref{discrete}$, $\mathcal{B}(\{S\})$ is eulerian if and
only if $|S|$ is odd. Suppose the statement is true for any
$(k-1)-$clique. Let a $k-$clique $\mathcal{L}=\{S_1,\ldots,S_k\}$
be an odd collection and $\mathcal{B}=\mathcal{B}(\mathcal{L})$.
By the deletion-contraction property we have

\[\zeta^{-1}(\mathcal{B})=\zeta^{-1}(\mathcal{B}\setminus
S_k)-\zeta^{-1}(\mathcal{B}/S_k).\] By Lemma $\ref{technical}$,
collections $\mathcal{L}\setminus\{S_k\}$ and $\mathcal{L}/S_k$
are odd $(k-1)-$cliques, so $\zeta^{-1}(\mathcal{B})=0$ by
induction. For an arbitrary proper subset $J\subset X$ let
$I=\{i\in[k]|S_i\subset J\}, \ X_{I}=\cup_{i\in I}S_i$ and
$\mathcal{L}_{I}=\{S_i|i\in I\}$. We have

\[\mathcal{B}|_J=\mathcal{B}({\mathcal{L}_I})\sqcup\mathcal{D}_{|J|-|X_{I}|}.\]
The collections $\mathcal{L}_I, I\subset[k]$ are odd cliques, so
$\mathcal{B}$ is eulerian by induction.

Suppose that $\mathcal{L}=\{S_1,\ldots,S_k\}$ is a $k-$clique such
that $e_{\mathcal{L}}(\mathcal{S})$ is odd for all proper
subcollections $\mathcal{S}\subset\mathcal{L}$ and
$e_{\mathcal{L}}(\mathcal{L})$ is even. We say that such
collection is {\it almost odd}. The collection
$\mathcal{L}\setminus \{S_k\}$ is an odd $(k-1)-$clique. Hence by
the deletion-contraction property we have

\[\zeta^{-1}(\mathcal{B}(\mathcal{L}))=-\zeta^{-1}(\mathcal{B}(\mathcal{L})/S_k).\]
By Lemma $\ref{technical}$, the collection $\mathcal{L}/S_k$ is
almost odd. We obtain by induction on $k$ that
$\zeta^{-1}(\mathcal{B}(\mathcal{L}))=(-1)^{k-1}\zeta^{-1}(\overline{\mathcal{D}}_d),$
for some even $d$. Therefore
$\zeta^{-1}(\mathcal{B}(\mathcal{L}))=(-1)^{k-1}2$ and
$\mathcal{B}(\mathcal{L})$ is not eulerian.

Let $\mathcal{L}=\{S_1,\ldots,S_k\}$ be a $k-$clique which is
neither odd nor almost odd and $\mathcal{S}\subset\mathcal{L}$ be
a minimal almost odd subcollection. The building set
$\mathcal{B}(\mathcal{S})$ is a restriction of
$\mathcal{B}(\mathcal{L})$ such that
$\zeta^{-1}(\mathcal{B}(\mathcal{S}))\neq0$. Consequently,
$\mathcal{B}(\mathcal{L})$ is not eulerian.
\end{proof}

A simplicial complex $\Delta$ on the set of vertices $V$ is a {\it
flag complex} if for an arbitrary subset of vertices $S\subset V$,
such that $\{i,j\}\in\Delta$ for all $i,j\in S$, it follows that
$S\in\Delta$.

\begin{proposition}\label{oddflag}
Let $\mathcal{L}$ be an antichain of subsets of the ground set
$X$, such that the corresponding building set
$\mathcal{B}(\mathcal{L})$ is eulerian. Then $\mathcal{L}$ is odd
and the nerve $\Delta(\mathcal{L})$ is a flag complex.
\end{proposition}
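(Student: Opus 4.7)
The plan is to prove the contrapositive of each assertion by induction on $|\mathcal{L}|$, using Lemma \ref{clique} (which completely characterizes eulerian-ness for clique antichains) together with the fact, already established, that restrictions of eulerian building sets are eulerian. I would prove the oddness first and then use it when handling the flag property.

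For the oddness claim, suppose for contradiction that some $\mathcal{S}\in\Delta(\mathcal{L})$ has $|\cap\mathcal{S}|$ even, and pick such $\mathcal{S}$ of minimal cardinality. Minimality forces every proper subcollection $\mathcal{S}'\subsetneq\mathcal{S}$ to be a sub-clique with $|\cap\mathcal{S}'|$ odd, so $\mathcal{S}$ is an almost-odd $k$-clique in the sense analyzed in the proof of Lemma \ref{clique}; that analysis already yields $\zeta^{-1}(\mathcal{B}(\mathcal{S}))=(-1)^{k-1}\cdot 2\ne 0$, so $\mathcal{B}(\mathcal{S})$ is not eulerian. To derive a contradiction, exhibit $\mathcal{B}(\mathcal{S})$ as a restriction of $\mathcal{B}(\mathcal{L})$: the restriction of $\mathcal{B}(\mathcal{L})$ to $J=\cup\mathcal{S}$ is $\mathcal{B}(\mathcal{M})$, where $\mathcal{M}=\{T\in\mathcal{L}:T\subset J\}\supset\mathcal{S}$, and this restriction is eulerian. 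If $\mathcal{M}=\mathcal{S}$ we are done immediately. Otherwise, as long as $\mathcal{M}\subsetneq\mathcal{L}$, the inductive hypothesis applied to $\mathcal{M}$ gives that $\mathcal{M}$ is odd, contradicting the fact that $\mathcal{S}\in\Delta(\mathcal{M})$ is a witness to non-oddness.

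For the flag property, take (for contradiction) a minimal pairwise-intersecting $\mathcal{S}\subset\mathcal{L}$ with $\cap\mathcal{S}=\emptyset$. Minimality forces every proper subcollection of $\mathcal{S}$ to have non-empty intersection, and combined with the already-proven oddness this makes $\mathcal{S}$ a minimal ``missing face'' relative to odd sub-cliques. By a deletion-contraction argument on one element of $\mathcal{S}$, closely paralleling the one in the proof of Lemma \ref{clique}, one checks that $\zeta^{-1}(\mathcal{B}(\mathcal{S}))\ne 0$, so $\mathcal{B}(\mathcal{S})$ is not eulerian. The same restriction/induction reduction as above then yields a contradiction to $\mathcal{B}(\mathcal{L})$ being eulerian.

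The main obstacle I foresee is the boundary case in both parts: the case $\mathcal{M}=\mathcal{L}$, equivalently $\cup\mathcal{S}=\cup\mathcal{L}$ with $\mathcal{S}\subsetneq\mathcal{L}$. Here one cannot reduce by restriction, because the natural restriction does not shrink the antichain. I would resolve this by choosing $T'\in\mathcal{L}\setminus\mathcal{S}$ and applying the deletion-contraction identity to $\mathcal{B}(\mathcal{L})$ at $T'$: since $\mathcal{L}$ is an antichain, $T'$ is not contained in any $T_i\in\mathcal{S}$, which forces a nontrivial interaction with the almost-odd (or missing-face) structure of $\mathcal{S}$. A careful parity analysis of the pair $(\mathcal{B}(\mathcal{L})\setminus T',\,\mathcal{B}(\mathcal{L})/T')$ --- each generated by a strictly smaller antichain and hence amenable to the induction hypothesis --- should yield $\zeta^{-1}(\mathcal{B}(\mathcal{L}))\ne 0$, completing the contradiction. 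Making this boundary argument clean is the technically hardest part of the proof.
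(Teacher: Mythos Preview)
Your approach mirrors the paper's almost exactly: the paper simply asserts that for a clique $\mathcal{S}\in\Delta(\mathcal{L})$ (respectively a minimal non-face) the building set $\mathcal{B}(\mathcal{S})$ is ``eulerian as a restriction of $\mathcal{B}(\mathcal{L})$'' and then invokes Lemma~\ref{clique}. You are right to be suspicious of that step. A restriction $\mathcal{B}(\mathcal{L})|_J$ equals $\mathcal{B}(\mathcal{M}_J)\sqcup\mathcal{D}$ with $\mathcal{M}_J=\{T\in\mathcal{L}:T\subset J\}$, and there is no reason every sub-antichain $\mathcal{S}\subset\mathcal{L}$ arises this way; the obstruction is exactly the ``boundary'' phenomenon you isolate, namely some $T'\in\mathcal{L}\setminus\mathcal{S}$ with $T'\subset\cup\mathcal{S}$. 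So the paper's argument and your non-boundary cases coincide, and the paper does not address the boundary case at all.

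Your proposed repair of the boundary case, however, does not go through, and in fact \emph{cannot}. The deletion--contraction step gives
\[
\zeta^{-1}(\mathcal{B}(\mathcal{L}))=\zeta^{-1}(\mathcal{B}(\mathcal{L}\setminus\{T'\}))-\zeta^{-1}(\mathcal{B}(\mathcal{L})/T'),
\]
and you want to conclude the left side is nonzero. But the inductive hypothesis, applied to the smaller antichains $\mathcal{L}\setminus\{T'\}$ and $\mathcal{L}/T'$, only tells you that the corresponding building sets are \emph{not eulerian}; it says nothing about $\zeta^{-1}$ of the whole building set. ``Not eulerian'' means some restriction has $\zeta^{-1}\neq 0$, and that restriction need not be the full set. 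No amount of parity bookkeeping will close this gap, because the statement itself fails in the boundary case. Take
\[
S_1=\{1,2,3,4,5\},\quad S_2=\{4,5,6,7,8\},\quad T=\{1,2,4,6,7\},\quad \mathcal{L}=\{S_1,S_2,T\}.
\]
This is an antichain; $\{S_1,S_2\}\in\Delta(\mathcal{L})$ has $|S_1\cap S_2|=2$, so $\mathcal{L}$ is not odd. But the only realizable sub-antichains $\mathcal{M}_J$ are $\emptyset$, the three singletons, $\{S_1,T\}$, $\{S_2,T\}$, and $\mathcal{L}$ itself (the pair $\{S_1,S_2\}$ is \emph{not} realizable, since $T\subset S_1\cup S_2$). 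The singletons and the two realizable pairs are odd cliques, hence eulerian by Lemma~\ref{clique}; and a direct computation with formula~(\ref{invariant1}) gives $\chi(\mathcal{B}(\mathcal{L}),m)=m^{8}-3m^{4}+2m^{2}$, so $\zeta^{-1}(\mathcal{B}(\mathcal{L}))=\chi(\mathcal{B}(\mathcal{L}),-1)=0$. Thus every restriction is either discrete or has $\zeta^{-1}=0$, so $\mathcal{B}(\mathcal{L})$ is eulerian while $\mathcal{L}$ is not odd. The obstacle you flagged is therefore not merely technical: it is a genuine counterexample to the proposition as stated, and the paper's proof has the same gap you found.
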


\begin{proof}

Let $\mathcal{S}\in\Delta(\mathcal{L})$ be a clique. The building
set $\mathcal{B}(\mathcal{S})$ is eulerian as a restriction of
$\mathcal{B}(\mathcal{L})$. Hence, by Lemma $\ref{clique}$,
$\mathcal{S}$ is an odd collection. Thus
$e_{\mathcal{L}}(\mathcal{S})$ is odd for every
$\mathcal{S}\in\Delta(\mathcal{L})$, so the collection
$\mathcal{L}$ is odd.

We have to prove that all minimal non-simplices of the complex
$\Delta(\mathcal{L})$ are edges. Suppose that there is a
collection $\mathcal{S}=\{S_1,\ldots,S_k\}\subset\mathcal{L}$,
where $k>2$, which is a minimal non-simplex of
$\Delta(\mathcal{L})$. It means that $\cap\mathcal{S}=\emptyset$
and $\cap\mathcal{S}_i\neq\emptyset$ for all subcollections
$\mathcal{S}_i=\mathcal{S}\setminus\{S_i\}, \ i\in[k]$. Since
$\zeta^{-1}(\mathcal{B}(\mathcal{S}_k))=0$, it follows from the
deletion-contraction property that

\[\zeta^{-1}(\mathcal{B}(\mathcal{S}))=-\zeta^{-1}(\mathcal{B}(\mathcal{S})/S_k).\]
By Lemma $\ref{technical}$, we obtain
$e_{\mathcal{S}/S_k}(\mathcal{S}_{k}/S_k)=e_{\mathcal{S}}(\mathcal{S}_k)+1$,
which is even, and
$e_{\mathcal{S}/S_k}(\mathcal{S}^{'}/S_k)=e_{\mathcal{S}}(\mathcal{S}^{'})-e_{\mathcal{S}}(\mathcal{S}^{'}\cup\{S_k\})+1$,
for all $\mathcal{S}^{'}\subset\mathcal{S}_k$, which are odd.
Hence the clique $\mathcal{S}/S_k$ is almost odd. From the proof
of Lemma $\ref{clique}$, we have
$\zeta^{-1}(\mathcal{B}(\mathcal{S})/S_k)=(-1)^{k-2}2$ and
consequently $\zeta^{-1}(\mathcal{B}(S))=(-1)^{k-1}2$, which
contradicts the condition that $\mathcal{B}(\mathcal{L})$ is
eulerian.

\end{proof}

To an antichain $\mathcal{L}=\{S_1,\ldots,S_m\}$ is associated the
{\it intersection graph} $\Gamma(\mathcal{L})$, with the set of
vertices $V(\Gamma(\mathcal{L}))=\mathcal{L}$ and the set of edges
$E(\Gamma(\mathcal{L}))=\{\{S_i,S_j\}| S_i\neq S_j, S_i\cap
S_j\neq\emptyset\}$, which is a simple graph. To a simple graph
$\Gamma=(V,E)$ we associate an abstract simplicial complex
$\Delta(\Gamma)$, called a {\it clique complex}. A subset of
vertices $S\subset V$ is a {\it clique} of $\Gamma$ if $\{i,j\}\in
E$ for all $i,j\in S$. The clique complex is defined by
$\Delta(\Gamma)=\{S\subset V|S \ \mbox{is a clique of} \
\Gamma\}.$ We have that a flag complex is a clique complex of its
$1-$skeleton.  By Proposition $\ref{oddflag}$, for eulerian
building set $\mathcal{B}(\mathcal{L})$, the nerve
$\Delta(\mathcal{L})$ of an antichain $\mathcal{L}$ is a clique
complex $\Delta(\Gamma(\mathcal{L}))$ of the intersection graph
$\Gamma(\mathcal{L})$.

\begin{lemma}\label{cycle}

Let $\mathcal{L}$ be an odd antichain of finite sets such that the
nerve $\Delta(\mathcal{L})$ is a 1-dimensional cycle on
$n=|\mathcal{L}|$ vertices. Then
$\zeta^{-1}(\mathcal{B}(\mathcal{L}))=2(-1)^{n-1}$.

\end{lemma}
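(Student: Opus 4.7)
The plan is to apply formula $(\ref{invariant1})$ directly and carry out a parity calculation adapted to the cycle structure of the nerve. I index $\mathcal{L}=\{S_1,\ldots,S_n\}$ so that $S_i\cap S_{i+1}\neq\emptyset$ (indices modulo $n$) are the edges of $\Delta(\mathcal{L})$; the one-dimensionality of the nerve forces all other pairwise intersections and all triple intersections to be empty, and the odd hypothesis then asserts that $|S_i|$ and $|S_i\cap S_{i+1}|$ are odd for every $i$. For a subcollection $\mathcal{S}\subset\mathcal{L}$ with index set $I\subset[n]$, let $k(I)$ denote the number of connected components of the induced subgraph $C_n[I]$ of the cycle and put $X_I=\bigcup_{i\in I}S_i$. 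The elements of $X\setminus X_I$ appear as isolated singleton components of $\mathcal{B}(\mathcal{S})$, so the total number of components is $c(\mathcal{S})=k(I)+|X|-|X_I|$.

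The main step is a parity count via inclusion-exclusion, which is available because the nerve contains no $2$-simplices. From $|X|=\sum_i|S_i|-\sum_i|S_i\cap S_{i+1}|$ I immediately get $|X|\equiv 0\pmod 2$, since each sum has $n$ odd terms. For a \emph{proper} subset $I\subsetneq[n]$ the induced subgraph $C_n[I]$ is a forest with $|I|-k(I)$ edges, so inclusion-exclusion gives $|X_I|\equiv|I|-(|I|-k(I))\equiv k(I)\pmod 2$. Substituting into the exponent in $(\ref{invariant1})$ yields $(-1)^{|\mathcal{S}|+c(\mathcal{S})}=(-1)^{|I|}$ whenever $I\subsetneq[n]$, while the full cycle $I=[n]$ behaves exceptionally: $|X_{[n]}|=|X|\equiv 0$ but $k([n])=1$, producing the anomalous contribution $(-1)^{n+1}$ from this single term.

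Summing over all subsets and using $\sum_{I\subsetneq[n]}(-1)^{|I|}=-(-1)^n=(-1)^{n+1}$ then gives
\[
\zeta^{-1}(\mathcal{B}(\mathcal{L}))=\sum_{\mathcal{S}\subset\mathcal{L}}(-1)^{|\mathcal{S}|+c(\mathcal{S})}=(-1)^{n+1}+(-1)^{n+1}=2(-1)^{n-1}.
\]
The one delicate point is the parity formula for $|X_I|$, which relies on the fact that deleting any vertex from $C_n$ leaves a forest, so the inclusion-exclusion for the union terminates at edges; the whole-cycle case $I=[n]$ is the lone exception to $|X_I|\equiv k(I)\pmod 2$, and it is precisely this discrepancy that prevents the alternating sum from vanishing and produces the value $2(-1)^{n-1}$.
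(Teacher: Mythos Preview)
Your proof is correct. The parity bookkeeping is sound: for proper $I\subsetneq[n]$ the induced subgraph $C_n[I]$ is a forest, inclusion--exclusion terminates at edges since the nerve has no $2$-simplices, and your identity $|X_I|\equiv k(I)\pmod 2$ follows. The single full-cycle term is the only one that deviates, and its contribution combines with $\sum_{I\subsetneq[n]}(-1)^{|I|}=(-1)^{n+1}$ to give $2(-1)^{n-1}$.

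Your approach is genuinely different from the paper's. The paper first invokes Proposition~\ref{parity} to replace $\mathcal{B}(\mathcal{L})$ by the concrete model $\beta_3(C_n)$, then applies deletion--contraction to obtain the recursion $\zeta^{-1}(\beta_3(C_n))=-\zeta^{-1}(\beta_3(C_{n-1}))$ (the path term $\zeta^{-1}(\beta_3(L_n))$ vanishes by Lemma~\ref{oddfree}), and closes with the base case $\zeta^{-1}(\beta_3(C_3))=2$. By contrast, you unfold formula~$(\ref{invariant1})$ directly and perform a global parity count, never reducing to a canonical model or recursing. Your argument is more self-contained and makes the source of the value $2(-1)^{n-1}$ transparent---it is exactly the single ``anomalous'' term from $I=[n]$, doubled by the alternating sum over proper subsets. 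The paper's route, on the other hand, showcases how the general machinery (parity invariance, deletion--contraction, odd free sets) assembles to handle such computations, which is useful as a template for the surrounding arguments in Section~8.
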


\begin{proof}
By Proposition $\ref{parity}$, we have
$\zeta^{-1}(\mathcal{B}(\mathcal{L}))=\zeta^{-1}(\beta_3(C_n))$,
where $C_n$ is the cycle graph on $n$ vertices. It follows from
the deletion-contraction property that

\[\zeta^{-1}(\beta_3(C_n))=-\zeta^{-1}(\beta_3(L_n))-\zeta^{-1}(\beta_3(C_{n-1})),\]where
$L_n$ is the path on $n$ vertices. Since
$\zeta^{-1}(\beta_3(L_n))=0$ by Lemma $\ref{oddfree}$, we obtain
$\zeta^{-1}(\beta_3(C_n))=(-1)^{n-3}\zeta^{-1}(\beta_3(C_3))$ by
induction on $n$. The direct calculation gives
$\zeta^{-1}(\beta_3(C_3))=2$.

\end{proof}

Let $\Delta$ be an abstract simplicial complex on the vertex set
$V$. A restriction $\Delta|_I$ of the complex $\Delta$ to a subset
of vertices $I\subset V$ is defined by
$\Delta|_I=\{\sigma\in\Delta|\sigma\subset I\}$. A subcomplex of
the form $\Delta|_I$ is called {\it full} subcomplex.

\begin{definition}

We say that an abstract simplicial complex $\Delta$ is {\it fully
acyclic} if it does not contain any full subcomplex which is a
1-dimensional cycle.

\end{definition}

\noindent Note that the $1-$skeleton of a fully acyclic simplicial
complex is a chordal graph. Recall that a graph $\Gamma$ is called
{\it chordal} if for each of its cycles on more than three
vertices there is an edge joining two vertices that are not
adjacent in the cycle.

\begin{proposition}\label{fully}

Let $\mathcal{L}$ be an antichain of finite sets such that the
building set $\mathcal{B}(\mathcal{L})$ is eulerian. Then the
nerve $\Delta(\mathcal{L})$ is a fully acyclic complex.
\end{proposition}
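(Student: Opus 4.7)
The plan is to argue by contradiction. Suppose $\mathcal{B}(\mathcal{L})$ is eulerian but $\Delta(\mathcal{L})$ contains a full subcomplex which is a $1$-dimensional cycle, realised by $\mathcal{S}=\{S_1,\dots,S_n\}\subset\mathcal{L}$ with $\Delta(\mathcal{L})|_{\mathcal{S}}=\Delta(\mathcal{S})$ the cycle graph $C_n$ on $n$ vertices. By Proposition \ref{oddflag}, $\mathcal{L}$ is odd, so $\mathcal{S}$ is odd too; Lemma \ref{cycle} then gives $\zeta^{-1}(\mathcal{B}(\mathcal{S}))=2(-1)^{n-1}\neq 0$. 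The strategy is to exhibit a subset $J\subset X$ for which $\mathcal{B}(\mathcal{L})|_J$ is non-discrete and either equals $\mathcal{B}(\mathcal{S})$ or otherwise carries nonvanishing $\zeta^{-1}$, contradicting eulerianness.

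The natural first attempt is $J=\bigcup\mathcal{S}$. Then $\mathcal{B}(\mathcal{L})|_J=\mathcal{B}(\mathcal{L}_J)$ with $\mathcal{L}_J=\{S\in\mathcal{L}:S\subset J\}\supseteq\mathcal{S}$, and if $\mathcal{L}_J=\mathcal{S}$ the contradiction is immediate. Otherwise I would induct on $|\mathcal{L}|$, also choosing $\mathcal{S}$ to be of minimal length $n$ across all full $1$-dimensional cycle subcomplexes of $\Delta(\mathcal{L})$. Pick $T\in\mathcal{L}\setminus\mathcal{S}$. If $T\not\subset\bigcup\mathcal{S}$, choose $x\in T\setminus\bigcup\mathcal{S}$ and restrict to $J'=X\setminus\{x\}$; every $S_i$ remains in $\mathcal{L}_{J'}$ while $T$ does not, so $\mathcal{B}(\mathcal{L})|_{J'}$ is an eulerian restriction still exhibiting $\Delta(\mathcal{S})$ as a full cycle subcomplex but generated by a strictly smaller antichain, closing the induction.

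The substantive case is when every $T\in\mathcal{L}\setminus\mathcal{S}$ satisfies $T\subset\bigcup\mathcal{S}$. Since $\mathcal{L}$ is an antichain, $T$ cannot lie in a single $S_i$, so it meets at least two of them. If $T$ meets two non-adjacent cycle members $S_i,S_j$, the shorter arc from $S_i$ to $S_j$ together with $T$ should yield a shorter full $1$-dimensional cycle subcomplex of $\Delta(\mathcal{L})$, contradicting the minimality of $n$; if $T$ happens also to meet some interior vertex of the chosen arc, one iterates the argument with the resulting shorter candidate. If instead $T$ meets only a contiguous arc of the cycle, the oddness of $\mathcal{L}$ combined with $T\subset\bigcup\mathcal{S}$ forces $T\cap S_j\cap S_{j+1}\neq\emptyset$ for at least one adjacent pair along the arc (otherwise $|T|$ would be a sum of odd summands over disjoint pieces, hence even, contradicting that $|T|$ is odd). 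In this situation I would use the deletion--contraction relation of Theorem \ref{del-contr} applied to $T$, together with the inductive hypothesis, to reduce $|\mathcal{L}|$.

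The main obstacle is the contiguous-arc subcase: the cycle $\mathcal{S}$ need not be reachable as an $\mathcal{L}_J$, so $\mathcal{B}(\mathcal{S})$ does not appear directly as a restriction, and the reduction must be carried out indirectly via deletion--contraction. Making this reduction preserve both the full cycle subcomplex structure and the eulerian property simultaneously, while controlling the intersection poset through Proposition \ref{parity}, is the technical heart of the argument.
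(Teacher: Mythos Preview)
Your opening moves---setting up the contradiction, invoking Proposition~\ref{oddflag} to conclude that $\mathcal{S}$ is odd, and then applying Lemma~\ref{cycle} to obtain $\zeta^{-1}(\mathcal{B}(\mathcal{S}))=2(-1)^{n-1}\neq 0$---are exactly the paper's proof. The paper stops at that point and simply asserts that this contradicts the eulerianness of $\mathcal{B}(\mathcal{L})$, implicitly treating $\mathcal{B}(\mathcal{S})$ as a restriction of $\mathcal{B}(\mathcal{L})$.

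You go further and raise a legitimate subtlety the paper glosses over: with $J=\bigcup\mathcal{S}$ one only gets $\mathcal{B}(\mathcal{L})|_J=\mathcal{B}(\mathcal{L}_J)$, and $\mathcal{L}_J$ may strictly contain $\mathcal{S}$, so $\mathcal{B}(\mathcal{S})$ need not literally be a restriction. Your inductive reduction when some $T\in\mathcal{L}\setminus\mathcal{S}$ has a point outside $\bigcup\mathcal{S}$ is clean, and the shorter-cycle argument when $T$ meets two non-adjacent $S_i,S_j$ is correct (and uses the minimality of $n$ in the right way). However, the ``contiguous-arc'' subcase is a genuine gap in your write-up. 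Your proposed remedy---applying Theorem~\ref{del-contr} to $T$---does not work as stated: deletion $\mathcal{B}(\mathcal{L})\setminus T$ and contraction $\mathcal{B}(\mathcal{L})/T$ are \emph{not} restrictions of $\mathcal{B}(\mathcal{L})$, so neither inherits the eulerian property, and your inductive hypothesis (which is about eulerian building sets on smaller antichains) cannot be invoked for them. Likewise, the appeal to Proposition~\ref{parity} only controls the value of $\zeta^{-1}$, not the eulerian condition on all restrictions. As written, the argument does not close in this case; you would need a different device---for instance, replacing a cycle member by $T$ to produce another full $1$-dimensional cycle and arguing by a more refined minimality, rather than deletion--contraction.
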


\begin{proof}

Suppose that there is a subcollection
$\mathcal{S}\subset\mathcal{L}$ such that the nerve
$\Delta(\mathcal{S})$ is a full subcomplex of
$\Delta(\mathcal{L})$ which is a 1-dimensional cycle. By
Proposition $\ref{oddflag}$, $\mathcal{S}$ is odd as a
subcollection of the odd collection $\mathcal{L}$. By Lemma
$\ref{cycle}$, we have
$\zeta^{-1}(\mathcal{B}(\mathcal{S}))\neq0$, which contradicts the
condition that $\mathcal{B}(\mathcal{L})$ is eulerian.

\end{proof}

\begin{remark}\label{chord}

A flag simplicial complex is fully acyclic if and only if its
$1-$skeleton is chordal. The class of fully acyclic flag complexes
is closed under taking restrictions of a simplicial complex.

\end{remark}

\begin{proposition}\label{opposite}

Let $\mathcal{L}$ be an odd antichain of finite sets such that the
nerve $\Delta(\mathcal{L})$ is a fully acyclic flag complex. Then,
the building set $\mathcal{B}(\mathcal{L})$ is eulerian.

\end{proposition}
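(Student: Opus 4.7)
The plan is to prove by strong induction on $|\mathcal{L}|$ the stronger claim that $\zeta^{-1}(\mathcal{B}(\mathcal{M}))=0$ for every non-empty odd sub-antichain $\mathcal{M}\subset\mathcal{L}$, regarded as a building set on $\cup\mathcal{M}$. This suffices for the eulerian conclusion, because for any $J\subset X$ the restriction decomposes as $\mathcal{B}(\mathcal{L})|_J=\mathcal{B}(\mathcal{L}_J)\sqcup\mathcal{D}_{|J\setminus\cup\mathcal{L}_J|}$ with $\mathcal{L}_J=\{S\in\mathcal{L}\colon S\subset J\}$, so multiplicativity of $\zeta^{-1}$ gives either a discrete restriction (when $\mathcal{L}_J=\emptyset$) or $\zeta^{-1}(\mathcal{B}(\mathcal{L})|_J)=0$. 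The sub-antichain $\mathcal{L}_J$ inherits all hypotheses: $\Delta(\mathcal{L}_J)$ is the full subcomplex of $\Delta(\mathcal{L})$ on $\mathcal{L}_J$, which is flag and fully acyclic by Remark~\ref{chord}, while oddness is immediate.

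The base case $|\mathcal{M}|=1$ follows from Example~\ref{discrete}, since $\mathcal{B}(\{S\})=\overline{\mathcal{D}}_{|S|}$ with $|S|$ odd. In the inductive step, if $\Gamma(\mathcal{M})$ is disconnected, factor $\mathcal{M}=\mathcal{M}_1\sqcup\mathcal{M}_2$ and use that $\mathcal{B}(\mathcal{M})=\mathcal{B}(\mathcal{M}_1)\cdot\mathcal{B}(\mathcal{M}_2)$ together with multiplicativity of $\zeta^{-1}$ and the inductive vanishing of each factor. Otherwise $\Gamma(\mathcal{M})$ is a connected chordal graph on at least two vertices (Remark~\ref{chord}) and therefore admits a simplicial vertex $S$, i.e., one whose neighborhood $N(S)$ is a clique. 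Apply the deletion--contraction recurrence of Theorem~\ref{del-contr} at $S$. The deletion $\mathcal{B}(\mathcal{M})\setminus S$ splits as $\mathcal{B}(\mathcal{M}\setminus\{S\})\sqcup\mathcal{D}_d$ with $d=|S\setminus\cup(\mathcal{M}\setminus\{S\})|$, and $\mathcal{M}\setminus\{S\}$ is non-empty and inherits all three hypotheses, so the deletion term vanishes by induction.

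For the contraction term, Theorem~\ref{hypergraph} lets us replace $\mathcal{B}(\mathcal{M}/S)$ by the building set of the minimal antichain $(\mathcal{M}/S)_{\min}$ extracted from $\{A/S\colon A\in\mathcal{M}\setminus\{S\}\}$ without changing $\zeta^{-1}$, and $|(\mathcal{M}/S)_{\min}|\le|\mathcal{M}|-1$ since $S/S=\{S\}$ becomes a singleton. The main work is verifying that $(\mathcal{M}/S)_{\min}$ inherits the three hypotheses. Oddness follows from Lemma~\ref{technical}: for each simplex $\mathcal{S}\subset\mathcal{M}\setminus\{S\}$, the value $e_{\mathcal{M}/S}(\mathcal{S}/S)$ is either $e_{\mathcal{M}}(\mathcal{S})$ (far case) or $e_{\mathcal{M}}(\mathcal{S})-e_{\mathcal{M}}(\mathcal{S}\cup\{S\})+1$ (close case), both odd. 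Because $S$ is simplicial, $N(S)$ is already a clique in $\Gamma(\mathcal{M})$, so contraction introduces no new pairwise intersections among the non-$S$ vertices; the $1$-skeleton of the nerve of $(\mathcal{M}/S)_{\min}$ is therefore an induced subgraph of $\Gamma(\mathcal{M})\setminus S$, which is chordal, so the nerve is fully acyclic. For the flag property, any pairwise-intersecting subcollection $\mathcal{T}\subset(\mathcal{M}/S)_{\min}$ lifts to a pairwise-intersecting collection in $\mathcal{M}$ (the $N(S)$-clique forces lifts from $N(S)$ to intersect pairwise already in $\mathcal{L}$), so the flag property of $\Delta(\mathcal{M})$ yields a common point that descends to $\cap\mathcal{T}$ after splitting into lifts intersecting $S$ and lifts disjoint from $S$.

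The main obstacle is this last verification: tracking the combined effects of the contraction and the antichain reduction on the nerve, and confirming that the full-acyclic and flag properties both survive. Once the inductive hypothesis applies to $(\mathcal{M}/S)_{\min}$, we obtain $\zeta^{-1}(\mathcal{B}(\mathcal{M}/S))=0$, and the deletion--contraction recurrence yields $\zeta^{-1}(\mathcal{B}(\mathcal{M}))=0$, completing the induction.
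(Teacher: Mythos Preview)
Your proof is correct and follows the same overall strategy as the paper---induction on the size of the antichain combined with the deletion--contraction recurrence---but you make one genuinely different choice that streamlines the argument. The paper contracts at an \emph{arbitrary} element $S_0\in\mathcal{L}$ and then must verify by hand that $\Delta(\mathcal{L}/S_0)$ is again a fully acyclic flag complex: it argues that a minimal non-simplex of size $\ge 3$ in $\Delta(\mathcal{L}/S_0)$ would produce an induced $4$-cycle in $\Delta(\mathcal{L})$, and that an induced cycle in $\Delta(\mathcal{L}/S_0)$ lifts to one in $\Delta(\mathcal{L})$. You instead invoke the standard fact that a connected chordal graph has a \emph{simplicial} vertex $S$ and contract there. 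Because $N(S)$ is already a clique in $\Gamma(\mathcal{M})$, contraction adds no new edges to the intersection graph; the $1$-skeleton of the contracted nerve is then an induced subgraph of $\Gamma(\mathcal{M})\setminus S$, automatically chordal, and the flag property follows from a short case split (all lifts close to $S$ share the contracted point; otherwise the common point of the lifts avoids $S$ and descends). This buys you a cleaner and shorter verification at the cost of importing the simplicial-vertex theorem; the paper's argument is longer but self-contained. You are also more explicit than the paper about passing to the minimal antichain $(\mathcal{M}/S)_{\min}$ after contraction and invoking Theorem~\ref{hypergraph}/Proposition~\ref{zeta} to justify that $\zeta^{-1}$ is unchanged, a point the paper leaves implicit.
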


\begin{proof}

We prove the statement by induction on the number of elements
$n=|\mathcal{L}|$ of an antichain $\mathcal{L}$. The statement is
true for $n=2$ by direct consideration. Suppose that the statement
is true for any antichain with at most $n-1$ elements. Let
$\mathcal{L}$ be a connected odd antichain of $n$ finite sets,
such that the nerve $\Delta(\mathcal{L})$ is a fully acyclic flag
complex. The restrictions $\Delta(\mathcal{S})$ are fully acyclic
flag complexes for all subcollection
$\mathcal{S}\subset\mathcal{L}$. Hence, by induction, we only need
to prove that $\zeta^{-1}(\mathcal{B}(\mathcal{L}))=0$. Let
$S_0\in\mathcal{L}$ be an arbitrary element. Since
$\zeta^{-1}(\mathcal{B}(\mathcal{L})\setminus S_0)=0$ by
induction, we obtain by the deletion-contraction property
\[\zeta^{-1}(\mathcal{B}(\mathcal{L}))=-\zeta^{-1}(\mathcal{B}(\mathcal{L})/S_0).\]
Let $\mathcal{S}_0=\{S\in\mathcal{L}\setminus\{S_0\}|S\cap
S_0\neq\emptyset\}$ be the collection of close vertices to the
vertex $S_0$, $\sigma=|\mathcal{P}(\mathcal{S}_0)|$ be the
geometrical simplex on vertices $\mathcal{S}_0$ and
$\Delta(\mathcal{S}_0)$ be the nerve of the collection
$\mathcal{S}_0$. By Lemma $\ref{technical}$, the geometric
realization of the nerve $\Delta(\mathcal{L}/S_0)$ is obtained as
\[|\Delta(\mathcal{L}/S_0)|=|\Delta(\mathcal{L}\setminus\{S_0\})|\cup_{|\Delta(\mathcal{S}_0)|}\sigma.\]
The nerve $\Delta(\mathcal{S}_0\cup\{S_0\})$ of the collection
$\mathcal{S}_0\cup\{S_0\}$ is a flag complex as a full subcomplex
of the flag complex $\Delta(\mathcal{L})$. Hence, by Lemma
$\ref{technical}$ and the fact that $\mathcal{L}$ is odd, we
obtain that $e_{\mathcal{L}/S_0}(\mathcal{S}/S_0)$ is odd for all
simpleces $\mathcal{S}/S_0\in\Delta(\mathcal{L}/S_0)$, where
$\mathcal{S}\subset\mathcal{L}\setminus\{S_0\}$. If
$\mathcal{S}/S_0\in\Delta(\mathcal{L}/S_0)$ is a collection which
is not in $\Delta(\mathcal{L}\setminus\{S_0\})$, we have
$e_{\mathcal{L}/S_0}(\mathcal{S}/S_0)=1$. Thus, $\mathcal{L}/S_0$
is an odd collection.

Suppose that $\mathcal{S}\subset\mathcal{L}$ is a collection such
that $\mathcal{S}/S_0$ is a minimal non-simplex of the nerve
$\Delta(\mathcal{L}/S_0)$ with at least three vertices. The
collection $\mathcal{S}$ is divided by
$\mathcal{S}=\mathcal{S}_c\cup\mathcal{S}_f$, where
$\mathcal{S}_c=\{S\in\mathcal{S}|S\cap S_0\neq\emptyset\}$ and
$\mathcal{S}_f=\{S\in\mathcal{S}|S\cap S_0=\emptyset\}$. By the
condition that $\mathcal{S}/S_0$ is a non-simplex, we have that
$\mathcal{S}_f\neq\emptyset$ and $|\mathcal{S}_c|\geq2$. We can
find two vertices $S^{'}, S^{''}\in\mathcal{S}_c$ such that
$S^{'}\cap S^{''}=\emptyset$ and a vertex $S\in\mathcal{S}_f$ such
that $S\cap S^{'}\neq\emptyset$ and $S\cap S^{''}\neq\emptyset$.
Then $\{S_0,S,S^{'},S^{''}\}$ form a full cycle in
$\Delta(\mathcal{L})$, contrary to the condition that
$\Delta(\mathcal{L})$ is fully acyclic.

Suppose that there is a collection
$\mathcal{S}\subset\mathcal{L}\setminus\{S_0\}$ such that
$\Delta(\mathcal{S}/S_0)$ is a full subcomplex of
$\Delta(\mathcal{L}/S)$ which is a 1-dimensional cycle. Then
$\Delta(\mathcal{S}\cup\{S_0\})$ is a 1-dimensional cycle which is
a full subcomplex of $\Delta(\mathcal{L})$, contrary to the
condition that $\Delta(\mathcal{L})$ is fully acyclic.

We obtain that $\Delta(\mathcal{L}/S_0)$ is fully acyclic flag
complex. By induction, $\mathcal{B}(\mathcal{L})/S_0$ is eulerian.
Hence, $\zeta^{-1}(\mathcal{B}(\mathcal{L})/S_0)=0$, which implies
$\zeta^{-1}(\mathcal{B}(\mathcal{L}))=0$.

\end{proof}

By Propositions $\ref{oddflag}$, $\ref{fully}$ and
$\ref{opposite}$ and Remark $\ref{chord}$ we obtain the complete
characterization of eulerian building sets in terms of
combinatorics of nerves of antichains of finite sets.

\begin{figure}[!h]
  \begin{center}
    \includegraphics[width=0.6\textwidth]{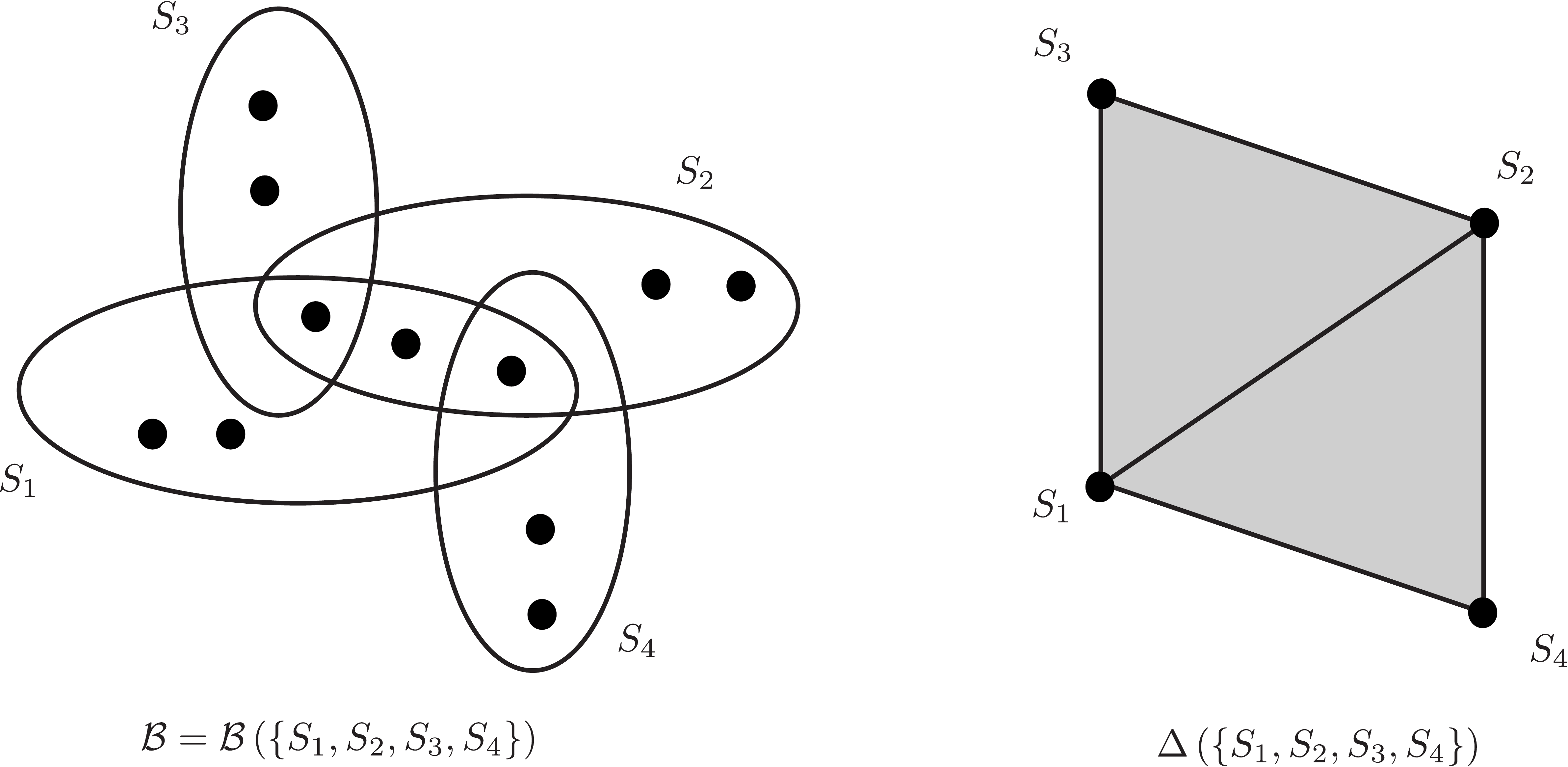}
  \end{center}
  \caption{\label{euler} The eulerian building set $\mathcal{B}\in \mathcal{E} {Set}_{11}$}
\end{figure}

\begin{theorem}\label{th}

Let $\mathcal{L}$ be an antichain of finite sets. The building set
$\mathcal{B}(\mathcal{L})$ is eulerian if and only if the
collection $\mathcal{L}$ is odd and its nerve
$\Delta(\mathcal{L})$ is the clique complex of a chordal graph.

\end{theorem}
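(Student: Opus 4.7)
The plan is that this theorem is essentially a consolidation of Propositions \ref{oddflag}, \ref{fully}, \ref{opposite} together with Remark \ref{chord}, so the strategy is simply to assemble the two implications from results already proved, and then translate the ``fully acyclic flag complex'' language into the ``clique complex of a chordal graph'' language.

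For the forward direction, I would suppose that $\mathcal{B}(\mathcal{L})$ is eulerian. Proposition \ref{oddflag} immediately gives that $\mathcal{L}$ is an odd antichain and that $\Delta(\mathcal{L})$ is a flag complex. Proposition \ref{fully} adds that $\Delta(\mathcal{L})$ is fully acyclic. A flag complex is always the clique complex of its $1$-skeleton, and by Remark \ref{chord} a flag complex is fully acyclic if and only if its $1$-skeleton is chordal. Hence $\Delta(\mathcal{L})$ is the clique complex of the chordal intersection graph $\Gamma(\mathcal{L})$, which is the required conclusion.

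For the converse, I would suppose that $\mathcal{L}$ is odd and $\Delta(\mathcal{L})$ is the clique complex of some chordal graph $\Gamma$. Observe first that as a clique complex, $\Delta(\mathcal{L})$ is automatically a flag complex, and its $1$-skeleton coincides with $\Gamma$, hence is chordal. By Remark \ref{chord}, it follows that $\Delta(\mathcal{L})$ is fully acyclic. The hypotheses of Proposition \ref{opposite} are thus met, so $\mathcal{B}(\mathcal{L})$ is eulerian.

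There is really no main obstacle here, since all the substantive work has already been carried out in the preceding propositions; the only care needed is the dictionary between ``fully acyclic flag complex'' and ``clique complex of a chordal graph,'' which is furnished by Remark \ref{chord} and the observation that a flag complex equals the clique complex of its $1$-skeleton. I would therefore present the theorem as an immediate corollary of these results, with only a few lines of text assembling the two directions.
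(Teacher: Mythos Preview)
Your proposal is correct and follows exactly the paper's approach: the theorem is stated immediately after Propositions~\ref{oddflag}, \ref{fully}, \ref{opposite} and Remark~\ref{chord} as their direct consequence, with no additional argument given. Your write-up simply unpacks the two implications explicitly, which is fine.
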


\begin{example}

A flag 1-dimensional fully acyclic complex is a tree. Let
$\mathcal{L}$ be an antichain of finite sets such that the nerve
$\Delta(\mathcal{L})$ is a tree. By Theorem $\ref{th}$,
$\mathcal{B}(\mathcal{L})$ is eulerian if and only if the
collection $\mathcal{L}$ is odd.

\end{example}

\begin{example}
Let $\beta_{2k+1}(\Gamma)$ be a building set formed by a simple
graph $\Gamma=(V,E)$, for $k\geq1$. The nerve of the collection
$\{S_e|e\in E\}$ is the clique complex $\Delta(\Gamma^{\ast})$ of
the dual graph $\Gamma^{\ast}$, which is fully acyclic if and only
if $\Gamma$ is a tree. Thus $\beta_{2k+1}(\Gamma)$ is eulerian if
and only if $\Gamma$ is a tree.

\end{example}

\begin{remark}

The building sets produce simple polytopes called nestohedra
$\cite{FS}, \cite{P}$. The natural question to ask is how the
property of being eulerian reflects on combinatorics of nestohedra
produced from eulerian building sets.

\end{remark}


\end{document}